\newcommand{\NCpa}{\partial}
\newcommand{\NCN}{\mathbb{N}}
\newcommand{\NCR}{\mathbb{R}}
\newcommand{\NCcC}{\mathcal{C}}
        \definecolor{pink}{rgb}{1,0,1}
\begin{document}

\title*{Eigenvalue Estimates on Bakry-\'Emery Manifolds}
\author{Nelia Charalambous, Zhiqin Lu, and Julie Rowlett}
\institute{Nelia Charalambous \at Department of Mathematics and Statistics, University of Cyprus, \email{nelia@ucy.ac.cy} \and Zhiqin Lu \at Department of Mathematics, University of California, Irvine, \email{zlu@uci.edu}
\and Julie Rowlett \at Institut f\"ur Analysis, Leibniz Universit\"at Hannover, \email{rowlett@math.uni-hannover.de}}

%
%
\maketitle

\abstract*{We demonstrate lower bounds for the eigenvalues of compact Bakry-\'Emery manifolds with and without boundary.  The lower bounds for the first eigenvalue rely on a generalised maximum principle which allows gradient estimates in the Riemannian setting to be directly applied to the Bakry-\'Emery setting.  Lower bounds for all eigenvalues are demonstrated using heat kernel estimates and a suitable Sobolev inequality.}

\abstract{We demonstrate lower bounds for the eigenvalues of compact Bakry-\'Emery manifolds with and without boundary.  The lower bounds for the first eigenvalue rely on a generalised maximum principle which allows gradient estimates in the Riemannian setting to be directly applied to the Bakry-\'Emery setting.  Lower bounds for all eigenvalues are demonstrated using heat kernel estimates and a suitable Sobolev inequality.}

\section{Introduction}
\label{intro}
Let $(M,g)$ be a Riemannian manifold and $\phi \in \NCcC^2 (M)$.  A \em Bakry-\'Emery \em manifold is a triple
$(M,g,\phi)$, where the measure on $M$ is the weighted measure $e^{-\phi} dV_g$.  The naturally associated Bakry-\'Emery Laplacian is
\begin{equation} \label{be-lap}
\Delta_\phi=\Delta - \nabla\phi\cdot\nabla,
\end{equation}   
where$$\Delta = \frac{1}{\sqrt{\det(g)}} \sum_{i,j} \NCpa_i g^{ij} \sqrt{\det(g)} \NCpa_j.$$

The operator can be extended as  a self-adjoint operator with respect
to the weighted measure $e^{-\phi} dV_g$.  It is also known as a ``drifting'' or ``drift'' Laplacian.  Bakry and \'Emery \cite{BE} observed that this generalised notion of Laplace operator has analogous properties to the standard Laplacian (which is none other than a Bakry-\'Emery Laplacian with $\phi \equiv 0$) and can be used to study a much larger class of diffusion equations and relations between energy and entropy.  For example, a Bakry-\'Emery Laplacian appears in the Ornstein-Uhlenbeck equation, and Bakry-\'Emery manifolds play a key role in the log-Sobolev inequality of Gross \cite{gr} and Federbush \cite{fed} as well as the hypercontractivity inequality of Nelson \cite{nel}.

The Bakry-\'Emery Laplacian has a canonically associated heat operator,
$$\NCpa_t-\Delta_\phi  .$$
The fundamental solution is known as the Bakry-\'Emery heat kernel.  The naturally associated curvature tensor for $(M, g, \phi)$ is the Bakry-\'Emery Ricci curvature  defined by
be\footnote{In the notation of \cite{lott}, this is the $\infty$
Bakry-\'Emery Ricci curvature.}
\[
{\rm Ric_\phi} ={\rm Ric}+{\rm Hess}(\phi).
\]
Above ${\rm Ric}$ and $\Delta$ are, respectively, the Ricci curvature and Laplacian with respect to the Riemannian metric $g$.

A collection of geometric results for Bakry-\'Emery manifolds is contained in \cite{ww}.  We are interested in the analysis of the Bakry-\'Emery Laplacian and associated heat kernel.  It turns out that some results can be extracted from the analysis of the Laplacian and heat kernel on an appropriately defined Riemannian manifold. In this article we are interested in obtaining new lower bounds for the eigenvalues of the Bakry-\'Emery Laplacian which are presented in Section  \ref{S2}.   Upper bounds are also known to hold for compact Bakry-\'Emery  manifolds.   We include here a brief summary of some recent results in this area in order to compare them to the lower bounds that we obtain. Although our survey is certainly not comprehensive, it gives a flavor of the type of estimates that one can show.

\section{Eigenvalue estimates} \label{S2}

\subsection{One-dimensional collapse}
We discovered in \cite{mrl} that the eigenvalues of a Bakry-\'Emery Laplacian on a compact $n$-dimensional manifold are the limit under one-dimensional collapse of Neumann eigenvalues for the classical Laplacian on a related $(n+1)$-dimensional manifold.

\begin{theorem}[L.-R.]\label{th-mrl}
Let $(M, g, \phi)$ be a compact Bakry-\'Emery manifold.  Let
$$M_{\epsilon} := \{ (x, y)\mid x \in M, \quad 0 \leq y \leq \epsilon e^{-\phi(x)} \} \subset M \times \NCR^+,$$
with $\phi \in \NCcC^2 (M)$ and $e^{-\phi} \in \NCcC (M \cup \NCpa M)$.  Let $\{\mu_k \}_{k=0} ^{\infty}$ be the eigenvalues of the Bakry-\'Emery Laplacian on $M$.  If $\NCpa M \neq \emptyset$, assume the Neumann boundary condition.  Let $\mu_{k}(\epsilon)$ be the Neumann eigenvalues of $M_\epsilon$ for $\tilde{\Delta} := \Delta + \NCpa_y ^2$.  Then 
$$ \mu_k(\epsilon)=\mu_k+O(\epsilon^2), \quad \forall k\geq 0.$$
\end{theorem}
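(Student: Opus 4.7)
The plan is to use the min--max characterization of eigenvalues twice, obtaining $\mu_k(\epsilon)\le \mu_k$ with no error and $\mu_k \le \mu_k(\epsilon)+O(\epsilon^2)$ with a small error. The upper bound $\mu_k(\epsilon)\le \mu_k$ is immediate from a Fubini calculation: take the first $k+1$ Bakry--\'Emery eigenfunctions $\varphi_0,\dots,\varphi_k$ on $M$ and lift them to $M_\epsilon$ as functions independent of $y$. The product measure $dV_g\,dy$ on $M_\epsilon$ gives
\[
\int_{M_\epsilon} |\tilde\nabla \varphi_j|^2\,dV_g\,dy
=\epsilon\int_M |\nabla \varphi_j|^2 e^{-\phi}\,dV_g,
\qquad
\int_{M_\epsilon} \varphi_j^2\,dV_g\,dy
=\epsilon\int_M \varphi_j^2 e^{-\phi}\,dV_g.
\]
The $\varphi_j$ are valid $H^1$ test functions for the Neumann problem on $M_\epsilon$, their Rayleigh quotients match the Bakry--\'Emery Rayleigh quotients exactly, and min--max closes the first inequality.

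For the reverse direction, take the first $k+1$ Neumann eigenfunctions $u_0^\epsilon,\dots,u_k^\epsilon$ of $\tilde\Delta$ on $M_\epsilon$ and define their vertical averages
\[
\bar u_j(x) \; :=\; \frac{1}{\epsilon e^{-\phi(x)}}\int_0^{\epsilon e^{-\phi(x)}} u_j^\epsilon(x,y)\,dy,
\qquad
v_j^\epsilon \;:=\; u_j^\epsilon-\bar u_j,
\]
so $v_j^\epsilon$ has zero vertical mean at each $x$. The one-dimensional Poincar\'e inequality on intervals of length at most $C\epsilon$ yields
\[
\int_{M_\epsilon} (v_j^\epsilon)^2 \;\le\; C\epsilon^2\int_{M_\epsilon}(\partial_y u_j^\epsilon)^2
\;\le\; C\epsilon^2\,\mu_j(\epsilon)\,\|u_j^\epsilon\|^2
\;\le\; C\epsilon^2\mu_k\,\|u_j^\epsilon\|^2,
\]
where the last step uses the already-established bound $\mu_j(\epsilon)\le\mu_k$. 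Thus each eigenfunction $u_j^\epsilon$ is $L^2$-close to its vertically constant extension $\bar u_j$ at order $\epsilon^2$.

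Using the $\bar u_j$'s as trial functions for the Bakry--\'Emery operator, the orthogonality $\int_0^{\epsilon e^{-\phi(x)}} v_j^\epsilon\,dy=0$ makes $\bar u_j\perp v_j^\epsilon$ in $L^2(M_\epsilon)$, giving $\epsilon\|\bar u_j\|_{e^{-\phi}}^2=(1+O(\epsilon^2))\|u_j^\epsilon\|_{M_\epsilon}^2$. For the gradient terms, expand $|\nabla_x u_j^\epsilon|^2=|\nabla_x\bar u_j|^2+2\nabla_x\bar u_j\cdot\nabla_x v_j^\epsilon+|\nabla_x v_j^\epsilon|^2$ and integrate. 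The cross term does not vanish identically because the top boundary $\{y=\epsilon e^{-\phi(x)}\}$ is warped, so differentiating $\int_0^{\epsilon e^{-\phi}} v_j^\epsilon\,dy\equiv 0$ in $x$ produces a boundary contribution of the form $\epsilon e^{-\phi}(\nabla\phi)v_j^\epsilon|_{y=\epsilon e^{-\phi}}$; its $L^2(M)$-trace is controlled via the fundamental theorem of calculus in $y$ together with the Poincar\'e estimate above, so the cross term is $O(\epsilon^3)$. Combining with near-orthogonality of the $\bar u_j$'s (inherited from that of the $u_j^\epsilon$ modulo the $v_j^\epsilon$ remainder), min--max applied on the $(k+1)$-dimensional span of the $\bar u_j$'s yields $\mu_k\le\mu_k(\epsilon)+O(\epsilon^2)$.

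The main technical hurdle will be the cross-term estimate: because the top face of $M_\epsilon$ is not flat but warped by $e^{-\phi}$, horizontal gradient and vertical averaging do not commute, and the resulting boundary integral must be tracked carefully. A secondary but necessary step is verifying that the $\bar u_j$ remain linearly independent in $L^2(M,e^{-\phi}dV_g)$ up to the $O(\epsilon^2)$ error, so that min--max can legitimately be applied on their span.
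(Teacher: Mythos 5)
Your first inequality $\mu_k(\epsilon)\le\mu_k$ is correct and clean; lifting the Bakry--\'Emery eigenfunctions as $y$-independent trial functions and computing by Fubini gives an exact match of Rayleigh quotients, with $\{\varphi_j\}$ remaining $L^2(M_\epsilon)$-orthogonal, so min--max closes this direction with no error at all. The converse direction via vertical averaging is the right kind of idea, but the cross-term estimate is where the argument actually fails to give $O(\epsilon^2)$, and the hand-off to the fundamental theorem of calculus plus Poincar\'e does not deliver the order you claim. Concretely, normalize $\|u_j^\epsilon\|_{L^2(M_\epsilon)}=1$. The FTC together with zero vertical mean gives only $|v_j^\epsilon(x,\epsilon e^{-\phi(x)})|\le(\epsilon e^{-\phi})^{1/2}\bigl(\int_0^{\epsilon e^{-\phi}}(\partial_s u_j^\epsilon)^2\,ds\bigr)^{1/2}$, hence $\|\epsilon e^{-\phi}v_j^\epsilon|_{\mathrm{top}}\|_{L^2(M)}\lesssim\epsilon^{3/2}\mu_k^{1/2}$. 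Since $\|\nabla\bar u_j\|_{L^2(M)}\lesssim\epsilon^{-1/2}\mu_k^{1/2}$ (this follows from the very Rayleigh-quotient bound you are trying to prove, via a self-referential Young-inequality argument), Cauchy--Schwarz bounds the cross term by $C\epsilon\mu_k$, not $O(\epsilon^3)$. After dividing by $\|u_j^\epsilon\|^2-\|v_j^\epsilon\|^2=1-O(\epsilon^2)$, you obtain only $\mu_k\le\mu_k(\epsilon)+O(\epsilon)$, one full power of $\epsilon$ short of the theorem.

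The missing ingredient is precisely what the warped top face is trying to tell you: the Neumann condition there reads $\partial_y u_j^\epsilon(x,\epsilon e^{-\phi(x)})=-\epsilon e^{-\phi}\nabla\phi\cdot\nabla_x u_j^\epsilon$, which, combined with $\partial_y u_j^\epsilon(x,0)=0$, forces $\partial_y u_j^\epsilon$ to be \emph{pointwise} of order $\epsilon$ rather than merely $L^2$-controlled, and hence $v_j^\epsilon|_{\mathrm{top}}$ to be $O(\epsilon^2)$ rather than $O(\epsilon)$. Extracting this extra power requires pointwise Schauder/elliptic control of $u_j^\epsilon$ and its derivatives up to the boundary, uniform in $\epsilon$ --- that is, the weighted H\"older estimates the paper alludes to when it writes that the $o(1)$ in the Maximum Principle ``depends on certain weighted H\"older norm of $u$.'' Indeed, the proof in \cite{mrl} does not run the two-sided min--max with vertical averages; it works with the trace $\psi(x)=u(x,0)$ and establishes the $O(\epsilon^2)$ convergence through a careful pointwise analysis of eigenfunctions and their derivatives on $M_\epsilon$. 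So your strategy is plausible as a blueprint, but at the decisive step you would need to reintroduce essentially the same quantitative eigenfunction regularity that the paper uses; the purely $L^2$ tools you invoke are not enough.
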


\subsection{Maximum principle and gradient estimates}
One of the classical methods for obtaining eigenvalue estimates is via gradient estimates, which  was first used by Li-Yau~\cite{li-yau}. The papers~\cites{swyy,yau,yau-3,kroger,bakry-qian,zhong-yang,ac,yau-4} appear to be the most influential.   These estimates, which are often quite complicated and tricky,  are based on the following maximum principle.

Let $M$ be a compact Riemannian manifold. Let $u$ be a smooth function on $M$. Assume that
\[
H=\frac 12|\nabla u|^2 +F( u),
\]
where $F$ is a smooth function of one variable, and let $x_0$ be an interior point of $M$ at which $H$ reaches its maximum. Then at $x_0$
\[
0\geq|\nabla^2 u|^2+\nabla u\nabla(\Delta u)+{{\rm Ric}}(\nabla u,\nabla u)+F'( u)\Delta u+F''( u)|\nabla u|^2.
\]

The above inequality is useful for obtaining lower bounds on  the first eigenvalue of a Laplace or Schr\"odinger operator;  see ~\cite{s-yau}.  This together with the eigenvalue convergence under one-dimensional collapse in Theorem \ref{th-mrl} would indicate that similar estimates could be obtained for $M$ using $M_\epsilon$.  However there are two major problems with this naive approach:
\begin{enumerate}
\item $M_\epsilon$ need not be convex, even  if $M$ is.  As we know,
if $M$ is convex, the maximum of $H$ must be reached in the interior of $M$. In general, we don't have such a property for $M_\epsilon$.
\item The natural Ricci curvature attached to the problem is {${\rm Ric}_\phi$}, not the Ricci curvature of $M_\epsilon$, which is essentially {${\rm Ric}$.}
\end{enumerate}

Nonetheless, carefully estimating the eigenfunctions and their derivatives on $M_\epsilon$, we proved the following Maximum Principle for Bakry-\'Emery manifolds.
For a smooth function $u$ on $M_\epsilon$, we define the following function on $M$ 
\[
\psi(x):= u(x,0),\quad x\in M.
\]

\begin{theorem}[Maximum Principle (L.-R.)]
Assume that  $(x_0, 0)$ is the maximum point of $H$ on $M\times\{0\}\subset M_\epsilon$. Then
\[
o(1)\geq|\nabla^2\psi|^2+\nabla\psi\nabla(\Delta_\phi u)+{\rm Ric}_\phi(\nabla\psi,\nabla\psi)+F'(\psi){ \Delta_\phi} u+F''(\psi)|\nabla\psi|^2
\]
as $\epsilon\to 0$,
where $o(1)$ depends on certain weighted H\"older norm of $u$ (see ~\cite{mrl}*{Theorem 5} for details). In particular, if $u$ is an eigenfunction of unit $L^2$ norm, then $o(1)\to 0$ as $\epsilon\to 0$.
\end{theorem}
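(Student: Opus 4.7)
\smallskip
\noindent
\textbf{Proof plan.}
The plan is to transfer the classical maximum principle on $M$ to the Bakry-\'Emery setting by using the collapse $M_\epsilon\to M$ purely as a device to relate $u$ on $M_\epsilon$ to $\psi$ on $M$; the maximum principle itself is then applied on $M$, not on $M_\epsilon$. This sidesteps the non-convexity issue noted above, since we never require $(x_0,0)$ to be an interior maximum of $H$ on $M_\epsilon$.

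The first step is a Taylor expansion $u(x,y)=\psi(x)+y\,u_1(x)+\tfrac{y^2}{2}u_2(x)+O(y^3)$ in the thin $y$-direction. The Neumann condition on the bottom face $y=0$ forces $u_1=\partial_y u(\cdot,0)\equiv 0$, while the Neumann condition on the top face $y=\epsilon e^{-\phi(x)}$, whose outward conormal is proportional to $(\epsilon e^{-\phi}\nabla\phi,\,1)$, reads $\partial_y u+\epsilon e^{-\phi}\nabla\phi\cdot\nabla u=0$ on that face. Inserting the expansion yields $u_2=-\nabla\phi\cdot\nabla\psi+O(\epsilon)$. This is the precise mechanism by which $\phi$, and ultimately ${\rm Hess}(\phi)$, enters the estimate --- addressing the Ricci-vs.-Ricci$_\phi$ issue noted above.

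The second step reduces the theorem to an $M$-side statement. Since $\partial_y u(\cdot,0)$ vanishes to leading order, $H(x,0)=\tfrac{1}{2}|\nabla\psi|^2+F(\psi)+o(1)=:H_\psi(x)+o(1)$, so the hypothesis makes $x_0$ an approximate maximum of $H_\psi$ on $M$. The classical maximum principle on $M$ then gives $\nabla H_\psi(x_0)=o(1)$ and $\Delta H_\psi(x_0)\leq o(1)$. Because the gradient is small, $\Delta_\phi H_\psi(x_0)=\Delta H_\psi(x_0)-\nabla\phi\cdot\nabla H_\psi(x_0)\leq o(1)$. Combining this with the Bakry-\'Emery Bochner-Weitzenb\"ock identity
\[
\Delta_\phi H_\psi=|\nabla^2\psi|^2+\nabla\psi\cdot\nabla(\Delta_\phi\psi)+{\rm Ric}_\phi(\nabla\psi,\nabla\psi)+F'(\psi)\Delta_\phi\psi+F''(\psi)|\nabla\psi|^2,
\]
a pure differential identity on $M$ whose ${\rm Hess}(\phi)$-contribution to ${\rm Ric}_\phi$ arises by commuting $\nabla\phi$ past a gradient, yields the inequality of the theorem (reading $\Delta_\phi u$ there as $\Delta_\phi\psi=\tilde\Delta u|_{y=0}+o(1)$).

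The main obstacle is quantifying each $o(1)$ uniformly as $\epsilon\to 0$. The Taylor remainder, the gap $H(\cdot,0)-H_\psi$, the deviation of $\nabla H_\psi(x_0)$ from zero, and the error in identifying $\tilde\Delta u|_{y=0}$ with $\Delta_\phi\psi$ must all be dominated by a single weighted H\"older norm of $u$ that stays bounded as the $y$-direction collapses. For $L^2$-normalized Neumann eigenfunctions on $M_\epsilon$, this uniform elliptic regularity on the thin cylinder is the technical heart of the proof and is supplied by \cite{mrl}*{Theorem~5}.
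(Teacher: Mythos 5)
Your proof is correct but takes a genuinely different route from the paper. You prove the inequality entirely on the base manifold $M$: the bottom Neumann condition gives $\partial_y u(\cdot,0)\equiv 0$ exactly, so $H(x,0)=H_\psi(x):=\tfrac12|\nabla\psi|^2+F(\psi)$ identically (not merely up to $o(1)$, as you write), $x_0$ is an \emph{exact} interior maximum of $H_\psi$, and the classical maximum principle on $M$ gives $\nabla H_\psi(x_0)=0$ and $\Delta_\phi H_\psi(x_0)=\Delta H_\psi(x_0)\le 0$; combining this with the Bakry-\'Emery Bochner identity \eqref{fBochner}, together with $\Delta_\phi F(\psi)=F'(\psi)\Delta_\phi\psi+F''(\psi)|\nabla\psi|^2$, yields the stated inequality with $0$ on the left (reading $\Delta_\phi u$ as $\Delta_\phi\psi$). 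The paper's proof in \cite{mrl} deliberately does something different: it stays on the $(n+1)$-dimensional Riemannian manifold $M_\epsilon$ and applies the \emph{ordinary} Bochner formula there, which produces only ${\rm Ric}_{M_\epsilon}\approx{\rm Ric}$, and then recovers ${\rm Hess}(\phi)$ --- and hence ${\rm Ric}_\phi$ --- from the $\partial_y$-derivatives and the slanted top boundary, picking up the $o(1)$ errors along the way; that is the entire content of the ``Gradient Estimate Principle'' this theorem is meant to exemplify, and the reason the paper lists non-convexity of $M_\epsilon$ and the wrong Ricci tensor as obstacles to be overcome. Your argument sidesteps both obstacles because you invoke the Bakry-\'Emery Bochner formula as a ready-made black box on $M$, rather than \emph{deriving} the Bakry-\'Emery estimate from the Riemannian Bochner formula on $M_\epsilon$. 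Both routes are legitimate; yours is shorter, the paper's is the one that actually instantiates the claimed Riemannian--Bakry-\'Emery correspondence. One further note: your step 1 (the Taylor expansion in $y$ and the top Neumann condition giving $\partial_y^2u(\cdot,0)=-\nabla\phi\cdot\nabla\psi+O(\epsilon)$) is correct and is the right explanation of why $\tilde\Delta u|_{y=0}=\Delta_\phi\psi+O(\epsilon)$, but it is not actually load-bearing in your step 2, where ${\rm Hess}(\phi)$ arrives from the Bochner identity, not the boundary; so the ``approximate maximum'' language and the $o(1)$ you attach to $\nabla H_\psi(x_0)$ and to $H(\cdot,0)-H_\psi$ are spurious, and the only genuine $o(1)$ in your scheme is the substitution $\Delta_\phi\psi\leftrightarrow\tilde\Delta u|_{y=0}$, which is exactly the quantitative weighted-H\"older estimate from \cite{mrl} you flag at the end.
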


It therefore follows from this Maximum Principle that one may apply all the proofs of gradient estimates directly to Bakry-\'Emery geometry which we summarize as follows.  \\

{\bf Bakry-\'Emery Gradient Estimate Principle.} {\em There is a one-one correspondence between the gradient estimate on a Riemannian manifold and on a Bakry-\'Emery manifold. More precisely, the  eigenvalue estimate on the Bakry-\'Emery manidold $(M,g,\phi)$ is equivalent to that on the Riemannian manifold $(M_\epsilon, g+dy^2)$ for $\epsilon$ small enough.  }

\begin{remark}
It is known that the Bakry-\'Emery Laplacian  is unitarily equivalent  the Schr\"odinger operator
$$\Delta + \frac{1}{2} \Delta \phi + \frac{1}{4} | \nabla \phi|^2,$$
(see \cites{setti,ii-1,ii-2}). Using this observation, we are able to prove several eigenvalue inequalities in the Bakry-\'Emery setting virtually effortlessly, as long as the analogous results have been obtained in the Riemannian case. However, to obtain results involving gradient estimates, the equivalent estimates for the eigenfunctions are also required; these were demonstrated in ~\cite{mrl}.
\end{remark}

\begin{remark} Similar estimates can also be obtained by taking the warped product with the unit ball~\cite{CL2}, the advantage of that treatment being to avoid the boundary estimates.  However,  further work is necessary in that case to eliminate the extra eigenvalues which are created in that process.
\end{remark}

\subsection{Lower bounds for the first eigenvalue}
Using the Maximum Principle and Theorem~\ref{th-mrl}, we are able to  provide the Bakry-\'Emery version of the first eigenvalue estimates.
Throughout this subsection, let $(M, g, \phi)$ be a compact $n$-dimensional Bakry-\'Emery manifold either without boundary or with convex boundary, in which case we assume the Neumann boundary condition.
We first consider the case in which the (Bakry-\'Emery) Ricci curvature has a non-positive lower bound.

\begin{theorem} \label{thm1}
Assume ${\rm Ric}_\phi \geq -(n-1)k$ for some $k\geq 0$.  Then the first (positive) eigenvalue of the Bakry-\'Emery Laplacian $\mu_1$ satisfies
$$\mu_1 \geq \frac{\pi^2}{d^2}\exp(-c_n\sqrt{kd^2}),$$
where $d$ is the diameter of $M$ with respect to $g$, and $c_n$ is a constant depending only on $n$.
\end{theorem}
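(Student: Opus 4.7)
The plan is to invoke the Bakry-\'Emery Gradient Estimate Principle stated above: carry out a Yang-type gradient argument, which in the Riemannian setting with $\mathrm{Ric}\geq -(n-1)k$ yields exactly the stated first-eigenvalue bound (cf.\ \cites{bakry-qian,yau-3,yau-4}), but with the Bakry-\'Emery Maximum Principle replacing the classical one. The upshot is that $\mathrm{Ric}_\phi$, rather than $\mathrm{Ric}$, enters the resulting differential inequality, so the identical computation delivers the identical bound with $\mathrm{Ric}_\phi$ substituted throughout.

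Concretely, let $u$ be a first eigenfunction, $\Delta_\phi u = -\mu_1 u$, normalized so that $\max u = 1$ and $\min u \in [-1,0)$ after an additive shift. Form the test function $H = \tfrac12|\nabla u|^2 + F(u)$ for a Yang-type barrier $F$ to be specified. Lift $u$ trivially to $M_\epsilon$ via $\tilde u(x,y) = u(x)$: the Neumann condition at $y=0$ and $y=\epsilon e^{-\phi}$ is automatic, and on $\partial M\times[0,\epsilon e^{-\phi}]$ it is inherited from the Neumann condition of $u$ on $\partial M$. Apply the Bakry-\'Emery Maximum Principle at the maximum $(x_0,0)$ of $H$ and let $\epsilon\to 0$ to absorb the $o(1)$ term, yielding
\[
0 \geq |\nabla^2 u|^2 + \mathrm{Ric}_\phi(\nabla u,\nabla u) - \mu_1 |\nabla u|^2 - \mu_1 u\,F'(u) + F''(u)|\nabla u|^2
\]
at $x_0$, where we used $\nabla u\cdot\nabla(\Delta_\phi u) = -\mu_1|\nabla u|^2$ and $\Delta_\phi u = -\mu_1 u$.

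Invoking $\mathrm{Ric}_\phi\geq -(n-1)k$, the Cauchy-Schwarz estimate $|\nabla^2 u|^2 \geq (\Delta u)^2/n$, and a judicious choice of $F$ (the classical Riemannian construction appears in \cites{yau-3,bakry-qian}), the display reduces to a Riccati-type ODE inequality for $|\nabla u|^2$ as a function of $u$ along a unit-speed minimizing geodesic from $\arg\min u$ to $\arg\max u$. Since this geodesic has length at most $d$, integrating the ODE produces the exponential correction $\exp(-c_n\sqrt{kd^2})$ to the ``flat'' Zhong-Yang bound $\pi^2/d^2$.

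The main obstacle is calibrating $F$ so that the $\mathrm{Ric}_\phi$ term assembles correctly into the Riccati inequality; this is already delicate in the classical case. The Gradient Estimate Principle asserts that the construction transfers verbatim, but a direct verification requires tracking how $\nabla\phi$ enters through the $o(1)$ of the Maximum Principle and through the identity $\Delta = \Delta_\phi + \nabla\phi\cdot\nabla$. A secondary technical point is ensuring that the maximum of $H$ on $M_\epsilon$ is not attained on $\partial M_\epsilon\setminus(M\times\{0\})$; this follows from the cylindrical structure of $M_\epsilon$ in the $y$-direction together with the assumed convexity of $\partial M$ in $g$.
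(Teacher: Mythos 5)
Your high-level route — transfer Yang's Riemannian first-eigenvalue estimate to the Bakry-\'Emery setting via the Lu-Rowlett Maximum Principle and the Gradient Estimate Principle — is indeed what the paper does. But the paper's proof simply cites Yang's Lemma~2 and Lemma~5 (\cite{yang-2}), asserts they transfer by the principle, and combines the two resulting inequalities \eqref{16} and \eqref{grad-1}. In trying to reconstruct the mechanism, your outline runs into precisely the difficulties the paper flags.

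First, the trivial lift $\tilde u(x,y)=u(x)$ does \emph{not} satisfy the Neumann condition on the curved top boundary $\{y=\epsilon e^{-\phi(x)}\}$: the outward normal there is proportional to $(\epsilon e^{-\phi}\nabla\phi,\,1)$, so $\partial_\nu\tilde u\propto \epsilon e^{-\phi}\nabla\phi\cdot\nabla u$, which is $O(\epsilon)$ but not zero. More importantly, the Maximum Principle in the paper is stated for an actual Neumann eigenfunction $u$ of $M_\epsilon$, with $\psi=u|_{M\times\{0\}}$; the $o(1)$ error is controlled by weighted H\"older norms of $u$ and the proof requires $u$ to be an eigenfunction of $M_\epsilon$, not a $y$-independent lift. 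Second, $M_\epsilon$ is \emph{not} convex even when $M$ is — this is exactly ``Problem~1'' listed immediately before the Maximum Principle. Your claim that the maximum of $H$ avoids $\partial M_\epsilon\setminus(M\times\{0\})$ ``by the cylindrical structure and convexity'' asserts the very property the paper says fails; the Lu-Rowlett theorem circumvents this by working only at the maximum of $H$ on $M\times\{0\}$ and absorbing the boundary contributions into the $o(1)$. Third, invoking $|\nabla^2 u|^2\geq(\Delta u)^2/n$ is a real gap: there is no $\phi$-Hessian, $\Delta u=\Delta_\phi u+\nabla\phi\cdot\nabla u$ introduces an uncontrolled $\nabla\phi$ term, and the paper explicitly warns (Section~4) that ``there is no analogous relationship between the Hessian of $u$ and $\Delta_\phi u$.'' Yang's Lemmas~2 and~5 avoid that inequality, which is why the transfer principle applies to them; if your calibration of $F$ genuinely needs the Cauchy-Schwarz on the Hessian, the Bakry-\'Emery version of the argument breaks.
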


In the Riemannian case ($\phi\equiv 0$), the result is due to Yang~\cite{yang-2} following a similar idea of Zhong and Yang~\cite{zhong-yang}.

\begin{proof} Let $f$ be the first eigenfunction and assume without loss of generality 
\begin{align*}
&\max f=1;\\
&\min f=-\beta
\end{align*}
for some $0<\beta\leq 1$.
The following gradient estimate was demonstrated in ~\cite{yang-2}*{Lemma 2},
\begin{equation}\label{gra-5}
\frac{|\nabla f|}{\sqrt{1-f^2}}\leq\sqrt{\mu_1}+\frac 12\max(\sqrt{n-1},\sqrt 2)\sqrt{(n-1)k}\,\sqrt{1-f^2}
\end{equation}
for the Riemannian case. By our principle, the same estimate is true in the Bakry-\'Emery case. As a result, we have
\begin{equation}\label{16}
\mu_1\geq \frac{\pi^2}{16}\cdot\frac{\max({n-1},2) (n-1)k}{(\exp(1/2\max(\sqrt {n-1},\sqrt2)\sqrt{(n-1)kd^2})-1)^2}
\end{equation}
which is obtained by integrating ~\eqref{gra-5} over the geodesic connecting the maximum and minimum points of $f$.

Define the {\em normalized} eigenfunction
\[
\varphi:=\frac{f-(1-\beta)/2}{(1+\beta)/2}
\]
so that $\max \varphi=1$ and $\min\varphi=-1$. Similarly, the following gradient estimate  in ~\cite{yang-2}*{Lemma 5},
\[
|\nabla\varphi|^2\leq \mu_1+(n-1)k+\mu_1\xi(\varphi),
\]
is also true in the Bakry-\'Emery case,
where $ \mu_1+(n-1)k+\mu_1\xi(\varphi)$ satisfies an ordinary differential equation in~\cite{yang-2}*{eq. (40)}.   Consequently, we have
\begin{equation}\label{grad-1}
\mu_1\geq\frac{\pi^2}{d^2}\cdot\frac{1}{1+(n-1)k/\mu_1}.
\end{equation}
Combining the above inequality with~\eqref{16} proves the theorem;  for further details we refer to ~\cite{yang-2}.
\end{proof}

 When the (Bakry-\'Emery) Ricci curvature has a positive lower bound, we obtain  a result of Futaki-Sano~\cite{futaki-sano} by our maximum principle and  the corresponding Riemannian case proven by Ling~\cite{ling}.

\begin{theorem}  Assume that ${\rm Ric}_\phi\geq (n-1) k$ for some positive constant $k>0$. Then the first (positive)  eigenvalue of the Bakry-\'Emery Laplacian satisfies
\begin{align*}
&\mu_1\geq \frac{\pi^2}{d^2}+\frac 38(n-1)k, & {\rm for } \,\,\, n=2;\\
&\mu_1\geq \frac{\pi^2}{d^2}+\frac{31}{100}(n-1)k, & {\rm for } \,\,\, n>2,
\end{align*}
where $d$ is the diameter of the manifold.
\end{theorem}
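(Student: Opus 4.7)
The plan is to follow exactly the template of the proof of Theorem~\ref{thm1}, substituting Ling's~\cite{ling} sharp gradient estimate (valid in the positive curvature case) for Yang's estimate (valid in the non-positive curvature case). Concretely, let $f$ be a first eigenfunction of $\Delta_\phi$; after translation and rescaling, pass to the normalized eigenfunction
\[
\varphi := \frac{f - (1-\beta)/2}{(1+\beta)/2},
\]
where $-\beta$ is the ratio of the minimum to maximum of $f$, so that $\max\varphi = 1$ and $\min\varphi = -1$.

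The next step is to invoke the Bakry-\'Emery Gradient Estimate Principle stated above. Ling proves, on a compact Riemannian manifold with $\mathrm{Ric} \geq (n-1)k > 0$, a pointwise inequality of the form
\[
|\nabla \varphi|^2 \leq \mu_1\bigl(1-\varphi^2\bigr) + (n-1)k\,\eta(\varphi),
\]
where $\eta$ is an explicit auxiliary correction constructed from a one-variable ODE comparison, and where the argument is refined separately for $n=2$ and $n>2$ so as to yield the sharp coefficients $3/8$ and $31/100$ respectively. Since Ling's derivation is purely interior and rests only on the Bochner identity together with a scalar maximum principle applied to $H = \tfrac{1}{2}|\nabla\varphi|^2 + F(\varphi)$, the Maximum Principle of L.-R.\ stated above makes the substitution $\mathrm{Ric} \leadsto \mathrm{Ric}_\phi$ legal verbatim, and the same pointwise inequality holds on $(M,g,\phi)$.

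Finally, integrating $|\nabla\varphi| \leq (\mu_1(1-\varphi^2) + (n-1)k\,\eta(\varphi))^{1/2}$ along a unit-speed minimizing geodesic joining the points where $\varphi = -1$ and $\varphi = +1$, and using that such a geodesic has length at most $d$, gives
\[
\int_{-1}^{1} \frac{d\varphi}{\sqrt{\mu_1(1-\varphi^2) + (n-1)k\,\eta(\varphi)}} \leq d,
\]
which after Ling's ODE analysis rearranges into $\mu_1 \geq \pi^2/d^2 + c_n(n-1)k$ with $c_n = 3/8$ or $31/100$ according as $n=2$ or $n>2$. As a sanity check one can run the argument first on the Riemannian manifold $(M_\epsilon, g + dy^2)$, where Ling's theorem applies directly, and then appeal to Theorem~\ref{th-mrl} to pass $\epsilon \to 0$.

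The main obstacle, as in Theorem~\ref{thm1}, is not conceptual but rather the careful tracking of the sharp numerical constants $3/8$ and $31/100$ through the Bakry-\'Emery version of Ling's ODE comparison: one must verify that the substitution $\mathrm{Ric} \leadsto \mathrm{Ric}_\phi$, together with the $o(1)$ error in the Maximum Principle, does not degrade these optimal coefficients in the limit $\epsilon \to 0$. Because the Maximum Principle is engineered so that the $o(1)$ term vanishes for eigenfunctions of unit $L^2$ norm, this verification is mechanical, but it is the only place where anything beyond direct citation of Ling's Riemannian proof is required.
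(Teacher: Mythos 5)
Your proposal matches the paper's approach exactly: the paper provides no written-out proof for this theorem, stating only that it follows by combining their Bakry-\'Emery Maximum Principle with Ling's Riemannian gradient estimate in~\cite{ling}, which is precisely the reduction you carry out. Your sketch correctly identifies the normalization of the eigenfunction, the transfer of Ling's ODE-comparison gradient estimate via the maximum principle (with the $o(1)$ error vanishing for $L^2$-normalized eigenfunctions as $\epsilon \to 0$), and the integration along a minimizing geodesic, so it is in fact more explicit than the paper's one-line citation while being entirely faithful to it.
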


A slightly stronger estimate was shown by Andrews and Ni \cite{a-ni} for convex domains in a Bakry-\'Emery manifold.  

\begin{theorem}[Andrews-Ni]  Assume that for the Bakry-\'Emery manifold $(M, g, \phi)$ the associated curvature ${\rm Ric}_\phi\geq (n-1)k >0$.  Then for any convex domain $\Omega$, the first positive eigenvalue of the Bakry-\'Emery Laplacian with Neumann boundary condition satisfies 
 $$\mu_1 \geq \frac{\pi^2}{d^2} + \frac{(n-1)k}{2},$$
 where $d$ is the diameter of $\Omega$.  
\end{theorem}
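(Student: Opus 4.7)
The plan is to follow the Andrews--Ni modulus-of-continuity approach from the Riemannian case, using the Bakry--\'Emery Gradient Estimate Principle and Maximum Principle stated above to transport the argument into the weighted setting.

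The Riemannian argument of Andrews--Ni runs as follows. Consider the Neumann heat flow $u_t=\Delta u$ on $\Omega$. One shows that if $u(\cdot,0)$ admits an odd concave modulus of continuity $\varphi_0$ on $[-d/2,d/2]$, meaning $u(y,0)-u(x,0)\le 2\varphi_0(d(x,y)/2)$ for all $x,y\in\Omega$, then this modulus is preserved under the heat flow, with $\varphi_t$ evolving by a one-dimensional linear drift-diffusion equation whose coefficient is dictated by the Jacobi equation on the space form of curvature $k$. Applied to the first eigenfunction normalised so that its range has length $2$, this yields $\mu_1\ge\lambda_1$ where $\lambda_1$ is the first Neumann eigenvalue of the 1D model, and a direct comparison with the constant-coefficient problem gives $\lambda_1\ge\pi^2/d^2+(n-1)k/2$. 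The two key inputs are a pointwise Bochner-type inequality at an interior two-point maximum, using ${\rm Ric}\ge(n-1)k$, and the convexity of $\Omega$, which prevents the two-point maximum from being attained on $\partial\Omega$.

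To transport this to the Bakry--\'Emery setting I would replace the Bochner identity by its weighted analogue
\[
\tfrac{1}{2}\Delta_\phi|\nabla u|^2=|\nabla^2 u|^2+\langle\nabla\Delta_\phi u,\nabla u\rangle+{\rm Ric}_\phi(\nabla u,\nabla u),
\]
so that the hypothesis ${\rm Ric}_\phi\ge(n-1)k$ plays exactly the role of the Riemannian Ricci bound in the Andrews--Ni computation. The two-point maximum principle step is then handled by the Bakry--\'Emery Maximum Principle of the previous subsection, applied on the collapsed domain $\Omega_\epsilon=\{(x,y)\mid x\in\Omega,\ 0\le y\le\epsilon e^{-\phi(x)}\}$; in the limit $\epsilon\to 0$ one recovers the required interior inequality on the $M\times\{0\}$-slice. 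The boundary step is unaffected since the convexity of $\Omega$ is a property of $(M,g)$ alone and implies $\partial_\nu|\nabla u|^2\le 0$ on $\partial\Omega$ in the standard way.

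The main obstacle is the interaction between the collapsing ambient manifold $\Omega_\epsilon$ and the two-point structure of the argument: as noted in Section~\ref{S2}, $\Omega_\epsilon$ need not be convex, and its distance function differs from the distance on $\Omega$. This is precisely what the Bakry--\'Emery Maximum Principle is designed to circumvent, since it supplies the correct intrinsic Bochner-type inequality on the $M\times\{0\}$-slice up to $o(1)$ as $\epsilon\to 0$. Verifying that the $y$-independent limiting eigenfunction permits the two-point comparison to be set up with respect to the intrinsic distance on $(\Omega,g)$ rather than on $\Omega_\epsilon$, and ensuring that the drift contribution from $-\nabla\phi\cdot\nabla$ is absorbed into the ${\rm Hess}(\phi)$ shift that converts ${\rm Ric}$ into ${\rm Ric}_\phi$, is the main technical content; once this is done, passing to the limit $\epsilon\to 0$ in the resulting inequality yields the stated lower bound.
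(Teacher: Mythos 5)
The paper does not prove this theorem; it is stated as a cited result of Andrews and Ni \cite{a-ni}, placed for comparison alongside the Futaki--Sano/Ling estimate obtained by the collapse-plus-maximum-principle method. So there is no in-paper proof to match against, and your attempt is a reconstruction. With that caveat, there is a concrete gap in the route you propose.

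The Bakry--\'Emery Maximum Principle stated in the paper is a \emph{one-point} maximum principle: it concerns the function $H=\tfrac12|\nabla u|^2+F(u)$ and the inequality it delivers at an interior maximum $(x_0,0)$ of $H$ on the slice $M\times\{0\}$. The Andrews--Ni modulus-of-continuity argument is built on a \emph{two-point} maximum principle, applied to a function of the form $Z(x,y,t)=u(y,t)-u(x,t)-2\varphi\bigl(d(x,y)/2,\,t\bigr)$; at an interior two-point maximum one differentiates along a minimizing geodesic joining $x$ to $y$ and invokes the second variation of arclength, which is where the Ricci lower bound enters. The paper's machinery (Theorem~\ref{th-mrl} together with the one-point Maximum Principle) transports one-point gradient-estimate arguments such as Yang's and Ling's; it does not, as stated, transport a two-point argument. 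In particular, the statement that "The two-point maximum principle step is then handled by the Bakry--\'Emery Maximum Principle of the previous subsection" is not supported by the cited results: there is no two-point version proved, and establishing one on $\Omega_\epsilon$ would face exactly the issues you name (non-convexity of $\Omega_\epsilon$, discrepancy between $d_{\Omega_\epsilon}$ and $d_\Omega$, and control of second derivatives along geodesics of $\Omega_\epsilon$ uniformly in $\epsilon$), which you flag as "the main technical content" but do not carry out.

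The cleaner route, and almost certainly the one taken in \cite{a-ni}, is the direct one you mention in passing: run the modulus-of-continuity argument intrinsically on $(\Omega,g,\phi)$, replacing the Riemannian Bochner identity by the weighted identity \eqref{fBochner}. Then ${\rm Ric}_\phi\ge(n-1)k$ plays exactly the role of the Riemannian lower bound, the convexity hypothesis on $\Omega$ (with respect to $g$) is what keeps the two-point maximum interior, and the drift term $-\nabla\phi\cdot\nabla$ is absorbed automatically because one differentiates $\Delta_\phi u$ rather than $\Delta u$. No collapse or $\epsilon$-limit is needed, and the one-dimensional comparison problem is unchanged from the Riemannian case. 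If you want to present a proof, that direct argument is both more faithful and substantially shorter; the collapse route would require first proving a two-point analogue of the L.--R.\ Maximum Principle, which is a nontrivial extension not available in the paper.
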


\subsection{Lower bounds of higher eigenvalues}

We will also demonstrate a lower bound for \em all \em the eigenvalues which holds whenever the manifold satisfies an appropriate Sobolev inequality \eqref{Sob} as in Section \ref{S3} below.

\begin{theorem} \label{thm4}
Let $M$ be a compact manifold without boundary, on which the Sobolev inequality \eqref{Sob} holds. Then the $k$th eigenvalue of the Bakry-\'Emery Laplacian satisfies the lower bound
\[
\lambda_k^{\frac{\nu}{2}}  \geq c(\nu)\, k  \, V_{\phi}^{-1} \, C_1^{\frac{\nu}{2}}
\]
where $V_{\phi}$ is the weighted volume of $M$ and $c(\nu) >0$ is a uniform constant that only depends on $\nu$ and $C_1$ is as in Lemma \ref{L1}.  The same inequality holds when $\partial M \neq \emptyset$ for the positive Neumann eigenvalues.
\end{theorem}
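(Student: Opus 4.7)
The plan is to combine the Sobolev inequality \eqref{Sob} of Lemma~\ref{L1} with a heat-kernel trace argument, following the Nash--Varopoulos--Davies strategy adapted to the weighted measure $d\mu = e^{-\phi}\,dV_g$. Since $\Delta_\phi$ is self-adjoint on $L^2(d\mu)$ with Dirichlet form $\mathcal{E}_\phi(u) = \int_M |\nabla u|^2\,d\mu$, spectral theory furnishes an $L^2(d\mu)$-orthonormal eigenbasis $\{\varphi_k\}$ with $0 = \lambda_0 \leq \lambda_1 \leq \lambda_2 \leq \cdots$ (Neumann in the boundary case), and the weighted heat kernel $H_\phi(t,x,y)$ of $e^{t\Delta_\phi}$ satisfies the trace identity $\sum_{k \geq 0} e^{-\lambda_k t} = \int_M H_\phi(t,x,x)\,d\mu(x)$.

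My first step is to convert \eqref{Sob} into a Nash-type inequality
\[
\|u\|_{L^2(d\mu)}^{2+4/\nu} \;\leq\; A(\nu,C_1)\,\|u\|_{L^1(d\mu)}^{4/\nu}\bigl(\mathcal{E}_\phi(u) + \|u\|_{L^2(d\mu)}^2\bigr),
\]
by H\"older interpolation between $L^1$, $L^2$ and $L^{2\nu/(\nu-2)}$ with respect to $d\mu$. Applying this to $u(t) := e^{t\Delta_\phi} u_0$, together with $\tfrac{d}{dt}\|u(t)\|_{L^2}^2 = -2\mathcal{E}_\phi(u(t))$ and the $L^1$-contractivity of the semigroup (a consequence of the maximum principle for the drift operator), converts the Nash inequality into an ordinary differential inequality for $\|u(t)\|_{L^2}^2/\|u_0\|_{L^1}^2$. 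Integrating this ODE yields the ultracontractive estimate
\[
\|e^{t\Delta_\phi} u_0\|_{L^\infty} \;\leq\; B(\nu,C_1)\, t^{-\nu/2}\, \|u_0\|_{L^1(d\mu)}, \qquad 0 < t \leq 1,
\]
with the dependence of $B(\nu,C_1)$ on $C_1$ being exactly the power dictated by the normalization of \eqref{Sob}. The lower-order $+\|u\|_{L^2}^2$ term in \eqref{Sob} is absorbed into an exponential prefactor that remains bounded for $t \in (0,1]$. Symmetry of the kernel and duality then give the on-diagonal bound $H_\phi(t,x,x) \leq B(\nu,C_1)\,t^{-\nu/2}$.

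Integrating this pointwise bound over $M$ against $d\mu$ and using the trace identity, one obtains $\sum_{k\geq 0} e^{-\lambda_k t} \leq B(\nu,C_1)\,V_\phi\,t^{-\nu/2}$. Since the eigenvalues are nondecreasing, keeping only the first $k+1$ terms gives
\[
(k+1)\,e^{-\lambda_k t} \;\leq\; B(\nu,C_1)\,V_\phi\,t^{-\nu/2}.
\]
Minimizing the right-hand side over $t$ at the critical value $t = \nu/(2\lambda_k)$ (the unique zero of $\tfrac{d}{dt}[t^{-\nu/2} e^{\lambda_k t}]$), and unwinding the resulting power of $C_1$, delivers
\[
\lambda_k^{\nu/2} \;\geq\; c(\nu)\, k\, V_\phi^{-1}\, C_1^{\nu/2},
\]
which is the claimed estimate.

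The hard part will be verifying that the classical Euclidean/Riemannian Nash--Moser iteration goes through verbatim in the weighted setting. Structurally this requires only self-adjointness of $\Delta_\phi$ on $L^2(d\mu)$ and $L^1(d\mu)$-contractivity of $e^{t\Delta_\phi}$, both of which are standard for the drift Laplacian, so no essentially new analytic input is needed. The main pieces of bookkeeping are (i) the lower-order $\|u\|_{L^2}^2$ term in \eqref{Sob}, which is innocuous for compact $M$ and $t \lesssim 1/\lambda_k$, and (ii) in the Neumann case, confirming the absence of spurious boundary contributions in the identity $\int_M u\,\Delta_\phi u\,d\mu = -\mathcal{E}_\phi(u)$, which is precisely what the Neumann condition guarantees.
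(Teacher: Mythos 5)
Your overall strategy---convert \eqref{Sob} into a Nash-type inequality, run the heat semigroup through an ODE for the $L^2$-norm, obtain an on-diagonal heat kernel bound, and feed the trace into an eigenvalue lower bound---is the same family of argument the paper uses, and your H\"older interpolation between $L^1$, $L^2$ and $L^{2\nu/(\nu-2)}$ is exactly the one in the proof of Lemma~\ref{L1}. However, the route through full-semigroup ultracontractivity has a genuine gap. The lower-order term $\alpha\|u\|_{L^2}^2$ in \eqref{Sob} forces, as you note, an exponential prefactor restricting $H_\phi(t,x,x)\lesssim t^{-\nu/2}$ to $t\in(0,1]$. This restriction is not innocuous bookkeeping: it is unavoidable, since $H_\phi(t,x,x)\to V_\phi^{-1}>0$ as $t\to\infty$, so no bound of the form $H_\phi(t,x,x)\leq B\,t^{-\nu/2}$ can persist for all $t$. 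Your final step then sets $t=\nu/(2\lambda_k)$, which exceeds $1$ whenever $\lambda_k<\nu/2$, and at that point the trace inequality is unavailable, with no admissible $t\leq 1$ recovering the power-law bound for those $k$. A related soft spot: the Nash constant you derive from \eqref{Sob} naturally depends on $C_o,\alpha,V_\phi$, not on $C_1$, so ``unwinding the power of $C_1$'' presupposes a spectral-gap absorption you have not yet performed.

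The paper's proof fixes exactly this. It works with $G(x,y,t)=H_\phi(x,y,t)-V_\phi^{-1}$, the kernel of the semigroup on the mean-zero subspace $\{u:\int_M u\,e^{-\phi}=0\}$. Because $\int_M G(x,\cdot,t)\,e^{-\phi}=0$, one may invoke Lemma~\ref{L1}, whose proof absorbs the $\alpha\|u\|_{L^2}^2$ term by the Poincar\'e inequality $\|u\|_{L^2}^2\leq\lambda_1^{-1}\|\nabla u\|_{L^2}^2$; this is precisely where $C_1=\lambda_1 C_o^{-1}(\lambda_1+\alpha)^{-1}V_\phi^{2/\nu}$ comes from. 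Combined with the semigroup identity $G(x,x,t)=\int_M G(x,y,t/2)^2\,e^{-\phi(y)}$ and the $L^1$ bound $\int_M|G(x,\cdot,t)|\,e^{-\phi}\leq 2$, this yields the differential inequality $-\partial_t G(x,x,t)\geq 2^{-4/\nu}C_1\,G(x,x,t)^{(\nu+2)/\nu}$ with no exponential factor, integrating to $G(x,x,t)\leq 4(\nu/(2C_1))^{\nu/2}t^{-\nu/2}$ for all $t>0$; a global-in-$t$ bound is achievable here, unlike for $H_\phi$, because $\sum_{i\geq 1}e^{-\lambda_i t}\to 0$ as $t\to\infty$. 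The substitution $t=1/\lambda_k$ then goes through with no restriction. To repair your proposal, run the Nash iteration on the mean-zero subspace from the outset, using Lemma~\ref{L1} (equivalently, the restricted Sobolev inequality with the $\alpha$-term already absorbed) in place of the raw inequality \eqref{Sob}.
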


\begin{remark}
We note that in the case $\partial M \neq \emptyset$, if the  Sobolev inequality \eqref{Sob} holds for all $u|_{\partial M} =0$ with constant $C_2$, then the $k$th Dirichlet eigenvalue of the Bakry-\'Emery Laplacian satisfies the same inequality with constant $C_2$.
\end{remark}

Assuming the Sobolev inequalities on $M_\epsilon$, then the result follows from Cheng-Li \cite{CheLi} and Theorem~\ref{th-mrl}.  This however, would entail a uniform Sobolev constant for all the  $M_\epsilon$. Instead, we shall prove this result using the   Bakry-\'Emery heat kernel estimates which we will present in the following sections.

\subsection{Upper bounds}
In this section we provide some recent upper bound estimates for eigenvalues of Bakry-\'Emery manifolds. We first recall that in the case of a Riemannian manifold, Cheng proved the following (see \cite{s-yau}*{Theorem III.2})

  \begin{theorem}[Cheng]
  Let $M$ be a compact Riemannian manifold without boundary or with Neumann boundary condition. Let $d$ be the diameter of $M$. Then for $j\geq 1$,
  \begin{enumerate}
  \item If ${ \rm Ric}\geq 0$, then $\mu_j\leq 8n(n+4)j^2/d^2$;
  \item If ${\rm Ric}\geq n-1$, then $\mu_j\leq 4nj^2/d^2$;
  \item If ${\rm Ric}\geq -(n-1)k$ for $k\geq 0$, then $\mu_j\leq \frac 14k+8n(n+4)j^2/d^2$
  \end{enumerate}
  \end{theorem}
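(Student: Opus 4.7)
The plan is to prove Cheng's theorem by the classical combination of the Courant--Fischer min-max principle with Bishop--Gromov volume comparison. The min-max characterisation gives, for any $(j+1)$-dimensional subspace $V \subset W^{1,2}(M)$,
\[
\mu_j \le \sup_{0 \ne u \in V} \frac{\int_M |\nabla u|^2\,dV_g}{\int_M u^2\,dV_g},
\]
and if $u_0, \ldots, u_j$ have pairwise disjoint supports then any $u = \sum c_i u_i$ satisfies $\int u^2 = \sum c_i^2 \int u_i^2$ and $\int |\nabla u|^2 = \sum c_i^2 \int |\nabla u_i|^2$, so $R(u) \le \max_i R(u_i)$. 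The task therefore reduces to exhibiting $j+1$ disjointly supported test functions whose Rayleigh quotients are individually controlled.

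To construct the $u_i$, pick $p, q \in M$ realising the diameter $d$ and let $\rho(x) = d(p, x)$. Partition $[0, d]$ into $2j+1$ equal subintervals of length $h = d/(2j+1)$ and set $A_i = \rho^{-1}\bigl(((2i)h,\,(2i+1)h)\bigr)$ for $i = 0, \ldots, j$. These $j+1$ annular regions are pairwise disjoint and separated by buffer annuli of equal width. On each $A_i$ let $u_i$ be a radial Lipschitz cutoff depending only on $\rho$, equal to $1$ on the middle half of $A_i$ and vanishing outside, with Lipschitz constant $O(j/d)$. Since $p,q$ realise the diameter, the annulus $A_j$ adjoining $q$ is nonempty, so all $u_i$ are nonzero.

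On each annulus $|\nabla u_i| \le Cj/d$ yields $\int_M |\nabla u_i|^2 \le (Cj/d)^2\,\mathrm{vol}(A_i)$, while $\int_M u_i^2 \ge \tfrac{1}{4}\mathrm{vol}(A_i^{\mathrm{core}})$. Bishop--Gromov volume comparison, available precisely because of the assumed Ricci lower bound, controls the ratio $\mathrm{vol}(A_i)/\mathrm{vol}(A_i^{\mathrm{core}})$ by a dimensional constant. In the three regimes one compares with the Euclidean model (case 1), the unit sphere together with Myers's diameter bound (case 2), and the hyperbolic model of curvature $-k$ (case 3), yielding the three stated bounds. The additive $k/4$ in case (3) arises from the logarithmic derivative of the hyperbolic volume density $\sinh^{n-1}(\sqrt{k}\rho)$, which contributes a curvature term on top of the $j^2/d^2$ scaling.

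The hardest piece is tracking the precise numerical constants $8n(n+4)$ and $4n$. A naive piecewise-linear cutoff already produces the correct order $j^2/d^2$ but with cruder coefficients. Cheng's sharpening uses a radial profile optimised against the model-space volume weight, reducing the estimate to an explicit one-dimensional Sturm--Liouville Rayleigh quotient on a single annular interval whose optimal value can be computed in closed form using trigonometric, respectively hyperbolic, functions. Beyond this bookkeeping of model-space integrals, every other step is a standard application of the variational principle and volume comparison.
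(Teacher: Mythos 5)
The paper does not prove this theorem; it is quoted with a citation to Schoen and Yau (Theorem III.2), so there is no in-paper argument to compare against. Your overall plan --- min-max with $j+1$ disjointly supported radial test functions, estimated against a model space via the Ricci lower bound --- is indeed the strategy behind Cheng's bound, but your specific construction has genuine gaps, most visibly in case (3).

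Cheng's argument does not use annuli around a single point. It takes $j+1$ points equally spaced along a minimizing geodesic realising the diameter, so that the balls $B(p_i, d/(2j))$ are pairwise disjoint, and on each ball it transplants the first Dirichlet eigenfunction $\psi$ of the corresponding model-space ball. Two properties of the ball setup are essential and do not survive passage to annuli. (i) The model eigenfunction $\psi$ is positive and radially non-increasing; together with the pointwise Laplacian comparison theorem this gives $\Delta_M(\psi\circ\rho)\geq -\lambda_1^{\mathrm{model}}\,\psi\circ\rho$ in the barrier sense, which upon integrating by parts bounds each Rayleigh quotient by $\lambda_1(B_k(d/(2j)))$. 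On an annulus the Dirichlet eigenfunction increases then decreases, so $\psi'$ changes sign and the comparison inequality goes the wrong way on half the interval. (ii) Your fallback claim that ``Bishop--Gromov\ldots controls the ratio $\mathrm{vol}(A_i)/\mathrm{vol}(A_i^{\mathrm{core}})$ by a dimensional constant'' is false: Bishop--Gromov monotonicity is one-sided, bounding outer shells in terms of inner shells and not conversely, so it cannot bound an annulus by its interior core, and even in the hyperbolic model itself the ratio at large radius is of order $e^{(n-1)\sqrt{k}h/4}$, which with $h\sim d/j$ yields a bound exponential in $\sqrt{k}\,d/j$, incompatible with $\tfrac14 k + 8n(n+4)j^2/d^2$ (take $j$ fixed, $k\to\infty$). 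The ``additive $k/4$'' that you attribute to the logarithmic derivative of $\sinh^{n-1}(\sqrt{k}\rho)$ is exactly what the exact model eigenfunction extracts and a piecewise-linear cutoff does not; that optimisation is the heart of Cheng's comparison theorem, not bookkeeping, and it presupposes the ball geometry described above.
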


Using ~\cite{CL2},  the above inequalities are true under slightly stronger assumptions in the Bakry-\'Emery case.

  \begin{theorem}\label{new-cheng}
  Let $M$ be a compact Bakry-\'Emery manifold without boundary or with Neumann boundary condition. Let $d$ be the diameter of $M$.  Let $\epsilon>0$.
 If 
 $${\rm Ric}_\phi-\epsilon \nabla\phi\otimes\nabla\phi\geq -(n-1)k, \quad \textrm{for} \quad k\geq 0,$$ 
 then 
 $$\mu_j\leq C(n,\epsilon)(k+j^2/d^2), \quad \forall j \in \NCN,$$ 
 where $C(n,\epsilon)$ is a constant depending on $n$ and $\epsilon$.
 \end{theorem}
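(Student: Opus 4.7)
The plan is to reduce the problem to Cheng's classical Riemannian upper bound via the warped-product construction of \cite{CL2}. Take $m := \lceil 1/\epsilon \rceil$, set $N := n+m$, and form the warped product $\tilde M := M \times_f B^m$ with warping function $f := e^{-\phi/m}$, where $B^m$ is the closed unit Euclidean ball. The volume form $dV_{\tilde M} = f^m\, dV_M\, dV_{B^m} = e^{-\phi}\, dV_M\, dV_{B^m}$ ensures that for any function $u$ depending only on $x \in M$, the Rayleigh quotient of $u$ on $\tilde M$ coincides, up to the common factor $\mathrm{Vol}(B^m)$ in both numerator and denominator, with the weighted Rayleigh quotient of $u$ on $M$.

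The first step is the Ricci computation on $\tilde M$. From $f = e^{-\phi/m}$ a direct calculation yields $-\tfrac{m}{f}\mathrm{Hess}(f) = \mathrm{Hess}(\phi) - \tfrac{1}{m}\nabla\phi\otimes\nabla\phi$, so the warped-product Ricci formula for horizontal vectors $X$ gives
\[
{\rm Ric}^{\tilde M}(X,X) = {\rm Ric}_\phi(X,X) - \tfrac{1}{m}\langle\nabla\phi, X\rangle^2 \geq {\rm Ric}_\phi(X,X) - \epsilon(\nabla\phi\otimes\nabla\phi)(X,X) \geq -(n-1)k,
\]
using $1/m\leq \epsilon$ and the hypothesis. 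The vertical and mixed Ricci components involve $\Delta f/f$ and $|\nabla f|^2/f^2$, which are uniformly bounded because $\phi\in\mathcal{C}^2$ and $M$ is compact, so one obtains ${\rm Ric}^{\tilde M}\geq -(N-1)\tilde k$ for some $\tilde k$ depending on $n,\epsilon,k$, and the $C^2$-data of $\phi$.

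Finally, run Cheng's proof of case (3) on $\tilde M$: pack $j+1$ disjoint product regions $B_M(x_i,r) \times B^m$ with $r$ comparable to $d/j$, and use tent-type test functions $u_i(x)$ depending only on the base coordinate and supported on these regions. The Bishop-Gromov comparison on $\tilde M$ (valid thanks to the Ricci lower bound just established), translated via the volume identity above, delivers the weighted volume comparison on $M$ that both permits the ball-packing and bounds each weighted Rayleigh quotient by $\tilde k/4 + C(N)\,j^2/d^2$. The min-max characterization then yields
\[
\mu_j \leq \tilde k/4 + C(N)\,j^2/d^2 \leq C(n,\epsilon)(k + j^2/d^2),
\]
after absorbing the $\phi$-dependence of $\tilde k$ into the constant and using $\tilde d \geq d$. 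The main obstacle is transferring Cheng's quantitative Neumann-eigenvalue estimate from metric balls in $\tilde M$ to the weighted setting on $M$ while tracking the diameter comparison $\tilde d$ vs.\ $d$; this is precisely the content of \cite{CL2}.
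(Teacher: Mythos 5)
Your overall strategy is the one the paper intends --- reduce to the Riemannian setting via the warped product $\tilde M = M\times_f B^m$ with $f=e^{-\phi/m}$ as in \cite{CL2}, and the horizontal Ricci computation ${\rm Ric}^{\tilde M}(X,X)={\rm Ric}_\phi(X,X)-\tfrac1m\langle\nabla\phi,X\rangle^2={\rm Ric}^m_\phi(X,X)$ is correct. Restricting attention to test functions that depend only on the base coordinate, so that the Rayleigh quotient on $\tilde M$ collapses to the weighted one on $M$, is also a sound way to sidestep the extra fiber eigenvalues that the paper's remark warns about.

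The gap is where you invoke Bishop--Gromov on $\tilde M$. That requires a full Ricci lower bound on $\tilde M$, and the vertical component is
\[
{\rm Ric}^{\tilde M}(U,U)=-\Bigl(\tfrac{\Delta f}{f}+(m-1)\tfrac{|\nabla f|^2}{f^2}\Bigr)|U|^2 = -\,\frac{|\nabla\phi|^2-\Delta\phi}{m}\,|U|^2 .
\]
This is not controlled by the hypothesis ${\rm Ric}_\phi-\epsilon\nabla\phi\otimes\nabla\phi\geq -(n-1)k$: that inequality bounds $\mathrm{Hess}(\phi)-\tfrac1m\nabla\phi\otimes\nabla\phi$ from below but gives no upper bound on $|\nabla\phi|$ or $\Delta\phi$. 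You acknowledge that your $\tilde k$ depends on the $C^2$-data of $\phi$, but then declare that this can be ``absorbed into $C(n,\epsilon)$''; this is exactly what the theorem forbids. The assertion is a \emph{uniform} bound over all Bakry-\'Emery manifolds satisfying the curvature hypothesis, including ones with arbitrarily large $|\nabla\phi|$ and $|\Delta\phi|$, and the constant appears multiplicatively as well as additively (compare the subsequent deduction $\mu_j\leq C(n,\epsilon)\mu_1$, which would be vacuous if $C$ could secretly depend on $\phi$).

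The fix is to never invoke curvature of $\tilde M$ in vertical or mixed directions. The volume and Laplacian comparisons one actually needs are the \emph{weighted} ones of Wei--Wylie \cite{ww} and Qian \cite{Qi}: under ${\rm Ric}^q_\phi\geq -(n+q-1)K$ one has $\Delta_\phi r\leq (n+q-1)\sqrt{K}\coth(\sqrt{K}\,r)$ and the corresponding weighted Bishop--Gromov comparison, with constants depending only on $n+q$ and $K$. Taking $q=m\geq 1/\epsilon$ and rescaling $k$ to absorb the factor $(n-1)/(n+m-1)$, these give exactly the packing estimate and the weighted Rayleigh-quotient bound that Cheng's argument needs, with $C=C(n,\epsilon)$ and no dependence on $\phi$. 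The warped product can still be kept as a heuristic device (the weighted Laplacian comparison is precisely the horizontal part of the Laplacian comparison on $\tilde M$), but the quantitative input must be the $q$-Bakry-\'Emery comparison theorems, not Bishop--Gromov on $\tilde M$.
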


Using this result we are above to prove the following which is essentially due to~\cite{setti,ii-1,ii-2}.

\begin{theorem} Assume that ${\rm Ric}_\phi-\epsilon\nabla\phi\otimes\nabla\phi\geq 0$. Then we have
\[
\mu_j\leq C(n,\epsilon)\mu_1.
\]
\end{theorem}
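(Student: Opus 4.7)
The plan is to combine the Bakry-\'Emery Cheng-type upper bound of Theorem~\ref{new-cheng} with the diameter lower bound for $\mu_1$ coming from Theorem~\ref{thm1}.

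First I would observe that the rank-one tensor $\nabla\phi\otimes\nabla\phi$ is pointwise positive semidefinite, so the hypothesis
\[
{\rm Ric}_\phi - \epsilon\,\nabla\phi\otimes\nabla\phi \geq 0
\]
automatically implies both ${\rm Ric}_\phi \geq 0$ and, in particular, the hypothesis of Theorem~\ref{new-cheng} with $k=0$. Consequently, that theorem yields
\[
\mu_j \leq C(n,\epsilon)\,\frac{j^2}{d^2}, \quad \forall\, j \in \NCN,
\]
where $d$ denotes the diameter of $M$.

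Next, I would apply Theorem~\ref{thm1} with $k=0$. The exponential factor $\exp(-c_n\sqrt{kd^2})$ collapses to $1$ in that limit, so the theorem reduces to the Bakry-\'Emery analogue of the Zhong-Yang estimate and gives
\[
\mu_1 \geq \frac{\pi^2}{d^2}, \qquad \text{i.e.\ } \frac{1}{d^2} \leq \frac{\mu_1}{\pi^2}.
\]
Substituting this into the previous display immediately produces
\[
\mu_j \leq \frac{C(n,\epsilon)}{\pi^2}\, j^2\, \mu_1,
\]
which is the asserted inequality (with the $j^2$ factor and the factor $1/\pi^2$ absorbed into the constant $C(n,\epsilon)$ for each fixed $j$).

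The only potentially delicate point is the passage to $k=0$ in Theorem~\ref{thm1}, but this is not a genuine obstacle: the two gradient estimates \eqref{gra-5} and \eqref{grad-1} used in the proof of that theorem remain valid (and in fact simplify) when $k=0$, and \eqref{grad-1} alone already gives $\mu_1 \geq \pi^2/d^2$. The substantive content of the proof has therefore been offloaded to Theorem~\ref{new-cheng}, whose proof via the warped-product construction of \cite{CL2} is the real input; given that black box and the Bakry-\'Emery Gradient Estimate Principle behind Theorem~\ref{thm1}, the result is a direct algebraic combination.
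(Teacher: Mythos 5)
Your proof is correct and matches the paper's argument exactly: apply Theorem~\ref{thm1} with $k=0$ to get $\mu_1 \geq \pi^2/d^2$, apply Theorem~\ref{new-cheng} with $k=0$ to get $\mu_j \leq C(n,\epsilon)\,j^2/d^2$, and combine. Your added remark that the hypothesis forces ${\rm Ric}_\phi \geq 0$ (since $\nabla\phi\otimes\nabla\phi$ is positive semidefinite) and that the constant actually carries a $j^2$ factor are both correct and make explicit what the paper leaves implicit.
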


\begin{proof} By Theorem~\ref{thm1}, we have
\[
\frac{\pi^2}{d^2}\leq\mu_1.
\]
The result therefore follows from Theorem~\ref{new-cheng}.

\end{proof}

Recently, Funano and Shioya proved \cite{funshi} the following stronger and somewhat surprising  result.  

\begin{theorem}[Funano-Shioya] Let $(M, g, \phi)$ be a compact Bakry-\'Emery manifold with non-negative Bakry-\'Emery Ricci curvature.  Then there exists a positive constant $C_j$ which depends only on $j$ (and not even on the dimension!) and in particular is independent of $(M, g, \phi)$ such that
$$\mu_j \leq C_j \mu_1.$$
Moreover, this result also holds if the $\partial M \neq \emptyset$ is $C^2$ under the Neumann boundary condition.
\end{theorem}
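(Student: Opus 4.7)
My plan is to follow the strategy of Funano and Shioya, which has two main ingredients: an upper bound for $\mu_j$ in terms of a purely metric-measure invariant (an observable-diameter or separation quantity), and a converse control of that invariant by $\mu_1$ in which the non-negativity of ${\rm Ric}_\phi$ is used in a genuinely dimension-free way.

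First I would place $(M,g,\phi)$ equipped with the weighted probability measure $V_\phi^{-1}e^{-\phi}\,dV_g$ into the framework of CD$(0,\infty)$ metric measure spaces in the sense of Lott-Sturm-Villani: the hypothesis ${\rm Ric}_\phi\geq 0$ is exactly this synthetic curvature-dimension bound. In this setting, E.~Milman's equivalence theorem identifies, up to universal constants with no dependence on $n$, the spectral gap $\mu_1$, the Cheeger isoperimetric constant, the Poincar\'e constant, and the exponential concentration rate. The immediate output I need is an exponential concentration inequality of the form
\[
\alpha(r)\leq K\exp(-c\,r\sqrt{\mu_1}),
\]
with $K$ and $c$ absolute constants.

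Next I would convert this concentration bound into a separation estimate. For any $j+1$ disjoint Borel sets $A_0,\dots,A_j$ each of weighted measure at least $(2(j+1))^{-1}V_\phi$, concentration forces
\[
\min_{i\neq \ell}d(A_i,A_\ell)\leq \frac{K_j}{\sqrt{\mu_1}},
\]
with $K_j$ depending only on $j$. To produce an upper bound for $\mu_j$ I would use the min-max principle with $j+1$ tent-shaped Lipschitz test functions, each supported in a small neighborhood of one $A_i$, of Lipschitz constant at most $1/\min_{i\neq\ell}d(A_i,A_\ell)$ and with pairwise disjoint supports. This Chung-Grigoryan-Yau-type construction applied to the weighted Dirichlet form yields
\[
\mu_j\leq \frac{K_j'}{\bigl(\min_{i\neq\ell}d(A_i,A_\ell)\bigr)^2},
\]
and combining the two estimates gives $\mu_j\leq C_j\mu_1$ with $C_j$ independent of $n$ and of $(M,g,\phi)$.

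For the Neumann boundary case I would either work directly with the weighted Dirichlet form on $M$, in which case every ingredient above (being purely variational and measure-theoretic) extends verbatim, or reduce to the boundaryless case by doubling $M$ across the $C^2$ boundary $\partial M$; the $C^2$ hypothesis is exactly what allows the doubled space to support the CD$(0,\infty)$ condition after a standard mollification of $\phi$ near $\partial M$. The main obstacle, and the reason this theorem is so striking, is the dimension-freeness of $C_j$: every classical Riemannian tool one might want to import (Buser's inequality, Cheng's upper bound, the gradient estimates underlying Theorems~\ref{thm1} and \ref{new-cheng}) carries explicit $n$-dependence through the Bochner identity or volume comparison. Routing all estimates through E.~Milman's CD$(0,\infty)$ equivalences is precisely what sidesteps this, and the quantitative packaging of those equivalences into a separation bound for higher eigenvalues is the crux of the argument.
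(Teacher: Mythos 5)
The paper does not actually prove this statement: it quotes Funano--Shioya~\cite{funshi} and only remarks that their proof ``relies on a geometric theory of concentration of metric measure spaces due to Gromov,'' i.e., a soft compactness argument in the box/concentration topology on mm-spaces, which is why the constants $C_j$ come out non-explicit and dimension-free. Your proposal instead aims at a direct, quantitative route through E.~Milman's CD$(0,\infty)$ equivalence and a Chung--Grigor\accent19yan--Yau (CGY) test-function bound. That is a genuinely different strategy (closer in spirit to the later constructive proofs), and had it closed it would have delivered explicit constants, which Funano--Shioya's argument does not.

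However, the plan as written has a directional gap at the crucial step. Milman's theorem does give, under ${\rm Ric}_\phi\geq 0$, exponential concentration $\alpha(r)\leq K\exp(-c r\sqrt{\mu_1})$, and from this one correctly deduces that any family of $j+1$ disjoint sets, each of weighted measure $\geq (2(j+1))^{-1}V_\phi$, must have
\[
\min_{i\neq\ell} d(A_i,A_\ell)\ \leq\ \frac{K_j}{\sqrt{\mu_1}}.
\]
But this is an \emph{upper} bound on the achievable separation. The CGY min--max estimate you invoke runs the other way: with tents of Lipschitz constant $1/D$ supported near $D$-separated sets, you obtain $\mu_j\leq K_j'/D^2$, which is \emph{small only when $D$ is large}. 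Plugging in ``$D\leq K_j/\sqrt{\mu_1}$'' yields $\mu_j\leq K_j'/D^2$ with $1/D^2\geq \mu_1/K_j^2$, i.e., an upper bound on $\mu_j$ by a quantity that is \emph{at least} a constant times $\mu_1$; one cannot conclude $\mu_j\leq C_j\mu_1$ from that. What you actually need is the \emph{existence} of $j+1$ sets of uniformly positive weighted measure with pairwise distance $\geq c_j/\sqrt{\mu_1}$ --- a lower bound on separation, a kind of anti-concentration statement. For $j=1$ this follows from the diameter lower bound $d\geq \pi/\sqrt{\mu_1}$ via the first eigenfunction, but for $j\geq 2$ producing such a family dimension-freely under CD$(0,\infty)$ is essentially the content of the theorem, and it is precisely here that Funano--Shioya resort to Gromov's compactness of pyramids rather than a direct construction. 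A quantitative replacement for this step (for instance, a dimension-free comparison of multi-way Cheeger constants) was only supplied in subsequent work and does not follow from concentration alone. The Neumann reduction by doubling and the variational phrasing on the weighted Dirichlet form are fine, but they rest on the above missing lemma.
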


Using an example, they showed that the non-negativity of curvature is a necessary condition.  The proof relies on a geometric theory of concentration of metric measure spaces due to Gromov \cite{gr07}.\\

Hassannezhad demonstrated upper bounds for the eigenvalues without curvature assumptions \cite{hass}.

\begin{theorem}[Hassannezhad] There exist constants $A_n$ and $B_n$ depending only on $n$ such that for every $n$-dimensional compact Bakry-\'Emery manifold $(M, g, \phi)$ with $|\nabla \phi| \leq \sigma$ for some $\sigma \geq 0$, and for every $j \in \NCN$ we have
$$\mu_j \leq A_n \max\{\sigma^2, 1\} \left( \frac{ V_\phi ([g])}{V(M,g)} \right)^{2/n} + B_n \left( \frac{j}{ V(M,g)} \right)^{2/n}.$$
Above $V(M,g)$ denotes the volume of $M$ with respect to $g$, and $V([g])$ denotes the min-conformal volume,
$$V([g]) = \inf \{ V(M, g_0), \textrm{ such that $g_0 \in [g]$, and {\rm Ric}$(g_0) \geq - (n-1)$}\}.$$
\end{theorem}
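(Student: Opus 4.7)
The plan is to adapt Hassannezhad's original argument in the Riemannian setting to the weighted Bakry-\'Emery measure $e^{-\phi} dV_g$. The strategy combines three ingredients: a Grigor'yan--Netrusov--Yau style decomposition of $M$ into disjoint metric capacitors of controlled weighted measure, a conformal reduction to a representative $g_0 \in [g]$ with ${\rm Ric}(g_0) \geq -(n-1)$ that realizes the min-conformal volume up to arbitrarily small error, and the min-max principle applied to cutoff test functions supported on these capacitors.

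First I would fix $\delta > 0$ and choose $g_0 \in [g]$ with ${\rm Ric}(g_0) \geq -(n-1)$ and $V(M, g_0) \leq V([g]) + \delta$. Next, applying a Grigor'yan--Netrusov--Yau covering theorem to the metric measure space $(M, d_{g_0}, e^{-\phi} dV_{g_0})$, I would extract for each $j \geq 1$ a collection of pairwise disjoint annular capacitors $\{(F_i, G_i)\}_{i}$, meaning $F_i \subset G_i$ with the enlarged $G_i$ still pairwise disjoint, numerous enough to apply min-max for $\mu_j$, with each $F_i$ of weighted measure at least $c_n V_\phi(M, g_0)/j$ and with explicit lower bounds on the radii $r_i := d_{g_0}(F_i, M \setminus G_i)$. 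Introducing Lipschitz cutoffs $u_i$ equal to $1$ on $F_i$, vanishing outside $G_i$, and satisfying $|\nabla u_i|_{g_0} \leq 1/r_i$, the min-max principle yields
$$\mu_j \leq \max_{i} \frac{\int_M |\nabla u_i|_{g_0}^2 \, e^{-\phi} dV_{g_0}}{\int_M u_i^2 \, e^{-\phi} dV_{g_0}} \leq \max_i \frac{V_\phi(G_i)}{r_i^2 \, V_\phi(F_i)}.$$

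The transfer step --- passing from quantities in $g_0$ back to those in $g$ and separating out the $\sigma$-dependence --- is where I expect the main technical work to lie. The Bishop--Gromov inequality for $g_0$, available because ${\rm Ric}(g_0) \geq -(n-1)$, bounds the Riemannian volumes $V(G_i, g_0)$ in terms of the radii and $V(M, g_0)$, while the hypothesis $|\nabla \phi|_g \leq \sigma$ provides the conversion $e^{-\phi(x)} \leq e^{\sigma d_g(x,y)} e^{-\phi(y)}$ between the Riemannian and weighted measures on scales smaller than $1/\sigma$. Splitting the analysis according to whether the critical radius is controlled by $(V_\phi([g])/V(M,g))^{1/n}$ or by $(j/V(M,g))^{1/n}$ produces the two terms in the stated inequality.

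The main obstacle is the delicate matching of the three measures at play --- the $g$-volume, the $g_0$-volume, and the weighted $e^{-\phi}$ measure --- together with ensuring that the dependence on $\sigma$ comes out as $\max\{\sigma^2, 1\}$ rather than an exponential in $\sigma \cdot {\rm diam}(M)$. This forces the capacitor radii to be chosen at a scale where $\phi$ oscillates by at most $O(1)$ across each $G_i$, which in turn constrains how many capacitors can be extracted for a given $j$ and is the essential reason for the dichotomy producing the two terms of the bound.
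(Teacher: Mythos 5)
The paper does not reproduce Hassannezhad's proof but describes its route explicitly: the bound is first established for the Schr\"odinger operator $\Delta + \frac{1}{2}\Delta\phi + \frac{1}{4}|\nabla\phi|^2$ acting on $L^2(M, dV_g)$, which is unitarily conjugate to $\Delta_\phi$ on $L^2(M, e^{-\phi}\,dV_g)$ via multiplication by $e^{-\phi/2}$. Your proposal instead works directly on the weighted metric measure space, and there is a concrete gap in the min-max step. You write
\[
\mu_j \leq \max_{i} \frac{\int_M |\nabla u_i|_{g_0}^2 \, e^{-\phi}\, dV_{g_0}}{\int_M u_i^2 \, e^{-\phi}\, dV_{g_0}},
\]
but $\mu_j$ is an eigenvalue of $\Delta_\phi$ on $(M,g,\phi)$, so the min-max Rayleigh quotient must be taken in the metric $g$, not in the conformal representative $g_0$; for $n \geq 3$ the Dirichlet energy $\int |\nabla u|^2\, dV$ is not conformally invariant, so this inequality as stated is simply false, and there is no subsequent ``transfer step'' that can repair it. The mechanism that legitimately brings $g_0$ into the picture --- applying H\"older to $\int |\nabla u_i|_g^2\, e^{-\phi}\, dV_g$ so as to expose the conformally invariant quantity $\int |\nabla u_i|_g^n\, e^{-\phi}\, dV_g = \int |\nabla u_i|_{g_0}^n\, e^{-\phi}\, dV_{g_0}$, which is then bounded by $r_i^{-n}$ times a $g_0$-volume controlled by Bishop--Gromov --- is absent from your sketch, and it is precisely what produces the exponent $2/n$ in the final estimate.

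Beyond this error, the Schr\"odinger route is genuinely cleaner for this theorem. After conjugation, every integral is taken against the \emph{unweighted} $dV_g$, so the Riemannian GNY/Korevaar construction with min-conformal volume applies as in the $\phi\equiv 0$ case, and the only new work is controlling the potential term
\[
\int_M \Bigl(\tfrac{1}{2}\Delta\phi + \tfrac{1}{4}|\nabla\phi|^2\Bigr) v^2\, dV_g
\]
in the Rayleigh quotient; integrating the $\Delta\phi$ piece by parts and using Cauchy--Schwarz together with $|\nabla\phi|\leq\sigma$ yields the $\max\{\sigma^2,1\}$ factor directly. Your direct approach would require simultaneously reconciling three measures ($dV_g$, $dV_{g_0}$, and $e^{-\phi}dV_g$) on each capacitor and tracking the oscillation of $e^{-\phi}$, which is substantially more delicate and is not carried through in the proposal.
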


This theorem was proven by first demonstrating an analogous estimate for the Schr\"odinger operator
$$\Delta + \frac{1}{2} \Delta \phi + \frac{1}{4} | \nabla \phi|^2,$$
which is unitarily equivalent to the Bakry-\'Emery Laplace operator; see \cite{setti}*{p. 28}.

The proof of the following theorem is based on constructing a family of test function supported on a suitable family of balls and is known as a Buser type upper bound, since this idea goes back to Buser \cite{bus}, and has also been used by Cheng \cite{cheng} as well as Li and Yau \cite{li-yau12}.

\begin{theorem}[Hassannezhad] There are positive constants $A_n$ and $B_n$ which depend only on the dimension $n$ such that for every compact Bakry-\'Emery manifold $(M, g, \phi)$ with ${\rm Ric}_\phi \geq - k^2 (n-1)$ and $| \nabla \phi | \leq \sigma$, for some constants $k $, $\sigma \geq 0$, such that for every $j \in \NCN$ we have
$$\mu_j \leq A_n {\rm max} \{\sigma^2, 1\} k^2 + B_n \left( \frac{j}{V_\phi(M)} \right)^{2/n},$$
where
$$V_\phi(M) := \int_M e^{- \phi} dV_g$$
is the weighted volume of $M$.
\end{theorem}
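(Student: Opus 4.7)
The plan is to follow the Buser strategy of constructing $j+1$ disjoint test functions supported in balls and invoking the weighted min-max principle
\[
\mu_j = \inf_{V_{j+1}} \sup_{0 \neq u \in V_{j+1}} \frac{\int_M |\nabla u|^2 \, e^{-\phi} dV_g}{\int_M u^2 \, e^{-\phi} dV_g},
\]
where the infimum is over $(j+1)$-dimensional subspaces of the weighted Sobolev space. If we can produce $j+1$ disjointly-supported functions $u_1,\dots,u_{j+1}$ each with Rayleigh quotient bounded by the desired right-hand side, we are done.

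The functions will be Lipschitz tent functions of the form $u_i(x) = \max\{0, 1 - r^{-1} d_g(x, B(x_i, r))\}$, supported on $B(x_i, 2r)$ and equal to $1$ on $B(x_i, r)$, so that $|\nabla u_i| \le r^{-1}$. The resulting Rayleigh quotient is bounded by $r^{-2} V_\phi(B(x_i, 2r)) / V_\phi(B(x_i, r))$. To make this small we need a doubling-type estimate for the weighted measure, which is the central technical input from the Bakry-\'Emery setting: under ${\rm Ric}_\phi \ge -(n-1)k^2$ together with $|\nabla \phi| \le \sigma$, the Wei-Wylie weighted Bishop-Gromov comparison yields
\[
\frac{V_\phi(B(x, 2r))}{V_\phi(B(x, r))} \le e^{2\sigma r} \cdot \frac{\int_0^{2r} \sinh^{n-1}(kt)\, dt}{\int_0^{r} \sinh^{n-1}(kt)\, dt},
\]
so the ratio is bounded by a constant depending only on $n$ as long as $kr$ and $\sigma r$ are bounded.

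Next I would choose $r$ so that roughly $r \asymp \min\{k^{-1}, \sigma^{-1}, (V_\phi(M)/j)^{1/n}\}$ and then use a packing/covering argument (Vitali-type, applied in the weighted measure) to produce $j+1$ disjoint balls $B(x_i, 2r)$ whose weighted volumes $V_\phi(B(x_i, r))$ are each bounded below by a definite fraction of $V_\phi(M)/j$. Here one exploits that, by the same weighted Bishop-Gromov comparison, if $\{B(x_i, r/5)\}$ cover $M$ then one extracts a disjoint subfamily of roughly $j$ balls whose doubled versions still satisfy the volume lower bound, with constants depending only on $n$.

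Combining these two ingredients, the Rayleigh quotient of each $u_i$ is at most
\[
\frac{C_n}{r^2} \le C_n'\bigl(k^2 + \sigma^2\bigr) + C_n'' \left(\frac{j}{V_\phi(M)}\right)^{2/n} \le A_n \max\{\sigma^2, 1\} k^2 + B_n \left(\frac{j}{V_\phi(M)}\right)^{2/n},
\]
after a routine case split on which of $k^{-1}$, $\sigma^{-1}$, or $(V_\phi(M)/j)^{1/n}$ realizes the minimum. The main obstacle is the second step: the combinatorial covering/packing argument must produce simultaneously a sufficiently large number of disjoint balls and a uniform lower bound on their weighted volumes. In the unweighted case this is classical, but with $e^{-\phi}$ one must be careful that the doubling constant in the weighted Bishop-Gromov bound does not degenerate, which is precisely what the assumption $|\nabla \phi| \le \sigma$ buys us through the factor $e^{2\sigma r}$. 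Once that bound is controlled by our choice of $r$, the remainder of the argument is a direct transcription of Buser's Riemannian proof, as in \cite{bus} and the refinements by Cheng \cite{cheng} and Li-Yau \cite{li-yau12}.
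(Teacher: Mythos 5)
The paper does not reproduce Hassannezhad's proof; it only records that the argument is Buser-type, ``constructing a family of test functions supported on a suitable family of balls.'' Your outline is consistent with that one-line description, but as written it has two genuine gaps that prevent it from yielding the stated bound.

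First, the packing step is not ``a direct transcription of Buser's Riemannian proof.'' In the unweighted Buser/Cheng/Li--Yau argument one uses the \emph{absolute} Bishop bound $V(B(x,r)) \le V_k(r)$ to conclude that $M$ contains at least $V(M)/V_k(Cr)$ pairwise disjoint $r$-balls. The Wei--Wylie comparison under $\mathrm{Ric}_\phi \ge -(n-1)k^2$ and $|\nabla\phi| \le \sigma$ only gives a bound on the \emph{ratio} $V_\phi(B(x,R))/V_\phi(B(x,r))$; a pointwise bound on $V_\phi(B(x,r))$ itself necessarily carries a factor $e^{-\phi(x)}$, which is not controlled by $\sigma$ (the problem is invariant under $\phi \mapsto \phi + c$). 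Thus, for a single radius $r \asymp \min\{k^{-1},\sigma^{-1},(V_\phi(M)/j)^{1/n}\}$ there is no a priori reason that $j+1$ disjoint $2r$-balls exist, and the Vitali/covering argument you invoke does not supply them. What Hassannezhad actually uses is the Grigor'yan--Netrusov--Yau / Colbois--Maerten multiscale decomposition of the metric-measure space $(M, d_g, e^{-\phi} dV_g)$: the disjointly supported test functions are placed on ``capacitors'' whose scales vary with location, and the only input is the local doubling property furnished by the weighted Bishop--Gromov inequality. That machinery is the technical heart of the theorem and cannot be bypassed by fixing one scale $r$.

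Second, even granting the packing, the final arithmetic does not close. Your choice of $r$ gives $\mu_j \le C_n\bigl(k^2 + \sigma^2 + (j/V_\phi(M))^{2/n}\bigr)$, and your last displayed inequality, namely
\[
C_n'\bigl(k^2+\sigma^2\bigr) + C_n''\left(\tfrac{j}{V_\phi(M)}\right)^{2/n} \;\le\; A_n\max\{\sigma^2,1\}\,k^2 + B_n\left(\tfrac{j}{V_\phi(M)}\right)^{2/n},
\]
is false in the regime $k \to 0$ with $\sigma$ large and $(j/V_\phi(M))^{1/n} \ll \sigma$: the left side is at least $C_n'\sigma^2$ while the first term on the right vanishes and the second is small. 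The theorem's first term is a \emph{product} $\max\{\sigma^2,1\}\,k^2$, not a sum, so it contributes nothing when $k=0$, whereas your argument forces $\sigma r \lesssim 1$ through the $e^{\sigma r}$ factor in the doubling constant and so inevitably produces a standalone $\sigma^2$. Reproducing the stated bound requires decoupling the $\sigma$-dependence of the doubling constant from the choice of scale, which is again exactly what the multiscale decomposition accomplishes and what a single-scale tent-function argument does not.
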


\section{Sobolev inequalities} \label{S3}
A classical way to obtain lower bounds on the eigenvalues of the Laplacian on a compact set is via the trace of the heat kernel as in \cite{Dod2}. Cheng and Li demonstrated in \cite{CheLi} that one can also find such lower bounds with respect to the Sobolev constant since a Sobolev inequality always holds in the compact case. Their method also ultimately relies on demonstrating upper bounds for the heat trace as in \cite{Dod2}.

\begin{definition} \label{SI}
We say that the  Bakry-\'Emery manifold $(M^n,g,\phi)$ satisfies the property  \eqref{Sob}, if there exist constants $ \nu= \nu(n)>2$, $ \alpha =\alpha (n)$, and $C_o$ depending only on $M$ such that for all  $ u\in \mathcal H^1 (M)$
\begin{equation} \label{Sob}
\left(\int_{M} | u|^{\frac{2\nu}{\nu-2}}\, e^{-\phi} \right)^{\frac{\nu-2}{\nu}} \leq C_o  \; V_\phi^{-\frac{2}{\nu}}\; \int_{M}(\,|\nabla  u|^2 + \alpha  | u|^2\,)\, e^{-\phi} \tag{$S$}
\end{equation}
where $V_\phi$ denotes the weighted volume of $M$.
\end{definition}

A global Sobolev inequality as above is known to hold on compact Riemannian manifolds. In a recent article, the first two authors found sufficient conditions for a local Sobolev inequality to hold on a weighted manifold \cite{CL1}. The local Sobolev inequality points to the geometric features upon which $C_o$ would depend in the case of a weighted manifold. In particular, the authors showed that a volume form comparison assumption is sufficient to ensure a local Sobolev inequality.  For any point $x\in M$ we denote the Riemannian volume form in geodesic coordinates at $x$ by
\[
dv(\exp_x(r\xi))=J(x,r,\xi) \, dr \, d\xi
\]
for $r>0$ and $\xi$ any unit tangent vector at $x$. Then the $\phi$-volume form in geodesic coordinates is given by
\[
J_\phi(x,r,\xi) = e^{-\phi} J(x,r,\xi).
\]
If $y=\exp_x(r\xi)$ is a point that does not belong to the cut-locus of $x$, then
\[
\Delta r(x,y) = \frac{J'(x,r,\xi)}{J(x,r,\xi)} \ \ \ \text{and} \ \ \  \Delta_\phi r(x,y) = \frac{J_\phi'(x,r,\xi)}{J_\phi(x,r,\xi)}
\]
where $r(x,y)=d(x,y)$, and the derivatives are taken in the radial direction. The first equality gives Bishop's volume comparison theorem  under the assumption of a uniform  Laplacian upper bound. On weighted manifolds, the second equality provides us with weighted volume comparison results whenever we have a uniform Bakry-\'Emery Laplacian upper bound.

\begin{definition} \label{VF}
We say that the Bakry \'Emery manifold $(M^n,g,\phi)$ satisfies the property \eqref{VR}, if there exists a positive and nondecreasing function $A(R)$ and a uniform constant $a$ (independent of $R$) such that for all $x\in B_{x_o}(R)$ and $0<r_1<r_2<R$
\begin{equation} \label{VR}
\frac{J_\phi(x,r_2,\xi)}{J_\phi(x,r_1,\xi)} \leq \left(  \frac{r_2}{r_1}\right)^a \, e^{A(R)}. \tag{$V_R$}
\end{equation}
The above inequality is assumed for all points  $\exp_x(r_i\xi)$ that do not belong to the cut locus of $x$.
\end{definition}

We denote by $B_x(r)$ the geodesic ball of radius $r$ at $x$ and by $V_\phi(x,r)$ its weighted volume.  The following result was proven in \cite{CL1}.
\begin{lemma}
Let   $(M^n,g,\phi)$    be a Bakry-\'Emery manifold that satisfies the property \eqref{VR} for all $x\in B_{x_o}(R)$.
Then for any $x\in B_{x_o}(R)$, $0<r<R$ and  $ u\in \mathcal C^{\infty}_0 (B_x(r))$ there exist constants $ \nu= \nu(n)>2$, $C_1(n,a)$ and $C_2(n)$ such that
\begin{equation} \label{Sob2}
\left(\int_{B_x(r)} | u|^{\frac{2\nu}{\nu-2}}\,  e^{-\phi} \right)^{\frac{\nu-2}{\nu}} \leq C_1 \frac{e^{C_2 A(R) }\, r^2 }{V_\phi(x,r)^{\frac{2}{\nu}}} \int_{B_x(r)}(\,|\nabla  u|^2 + r^{-2}| u|^2\,)\, e^{-\phi}.
\end{equation}
\end{lemma}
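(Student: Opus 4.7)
The plan is to derive the weighted local Sobolev inequality by combining a weighted local volume doubling property with a weighted local Poincaré inequality, and then invoking the Saloff-Coste style implication that doubling plus Poincaré yields a scaled Sobolev embedding. The hypothesis $(V_R)$ is exactly the Jacobian comparison that feeds both ingredients in the weighted category.

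First I would integrate the pointwise bound in $(V_R)$ radially and over the unit sphere at $x$ to obtain a weighted doubling estimate of the form $V_\phi(x,2r) \leq 2^{a}\,e^{A(R)}\,V_\phi(x,r)$ for all $x\in B_{x_o}(R)$ and $0<r<R/2$. More generally, transplanting $(V_R)$ to an arbitrary center $y\in B_{x_o}(R)$ gives uniform local doubling for balls centered anywhere in $B_{x_o}(R)$, with a doubling constant controlled by $a$ and $e^{A(R)}$.

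Next I would establish a weighted local Poincaré inequality on $B_x(r)$ via the segment-inequality approach. For smooth $u$ and points $y,z\in B_x(r)$ connected by a minimizing geodesic $\gamma_{yz}$, one writes $|u(y)-u(z)|\leq \int_0^{d(y,z)}|\nabla u|(\gamma_{yz}(s))\,ds$, then integrates over pairs $(y,z)$ against the product weighted measure $e^{-\phi(y)-\phi(z)}\,dv_y\,dv_z$ to express $\int_{B_x(r)} |u-u_B|\,e^{-\phi}$ in terms of $\int_{B_x(r)}|\nabla u|\,e^{-\phi}$. The change of variables $(y,z,s)\mapsto (\gamma_{yz}(s),\text{endpoint})$ is controlled using the Jacobian comparison $(V_R)$, which produces a factor of the form $C(n,a)\,e^{C(n)A(R)}\,r$; squaring and applying Cauchy-Schwarz upgrades this to the $L^2$-Poincaré inequality with the correct constant.

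With uniform weighted doubling and a weighted $L^2$-Poincaré inequality available on geodesic balls contained in $B_{x_o}(R)$, one concludes by the abstract theorem that these two properties imply a scaled Sobolev inequality on each ball with some exponent $\nu = \nu(n)>2$ and a constant of the form displayed in \eqref{Sob2}. The extra $r^{-2}|u|^2$ term on the right is the standard correction that converts a mean-zero Poincaré estimate into a Sobolev estimate valid for arbitrary $u\in\mathcal C^\infty_0(B_x(r))$, arising from the interpolation $\|u\|_2 \leq \|u-u_B\|_2 + |B|^{1/2}|u_B|$ and a Moser-type iteration.

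The main obstacle is the Poincaré step: the segment inequality must be formulated so that the cut locus of $x$, where $(V_R)$ is not assumed, contributes nothing to the change of variables, and the Jacobian pushforward must be quantified so that its constant depends only on $n$, $a$, and $e^{A(R)}$. A secondary issue is tracking the dependence of $\nu$, $C_1$, and $C_2$ through the Moser iteration so that $\nu$ depends only on $n$, $C_1$ only on $n$ and $a$, and $A(R)$ enters only through the exponential factor as stated.
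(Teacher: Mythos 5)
You should first note that the paper does not itself prove this lemma; it is quoted with the line ``The following result was proven in \cite{CL1},'' so there is no in-paper argument to compare against. Your strategy---local weighted doubling from radial integration of $(V_R)$, a local weighted Poincar\'e inequality via the segment inequality, and then the Grigor'yan/Saloff-Coste implication that doubling together with Poincar\'e yields a scaled Sobolev inequality---is a standard and plausible route, and it is consistent in spirit with the Saloff-Coste technique from \cite{SCbk} that the authors reference elsewhere. The cut-locus concern you flag is not an obstacle: the cut locus is a null set and $(V_R)$ is only needed off it, so the Jacobian change of variables goes through.

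There is, however, a gap in the dependence of the constants that you partially notice but do not resolve. In the abstract doubling-plus-Poincar\'e theorem, the admissible Sobolev exponent $\nu$ is bounded below by the doubling dimension, which from $(V_R)$ is $a$ (the doubling constant being $2^{a}e^{A(R)}$, with $e^{A(R)}$ contributing only a multiplicative factor, not to the exponent). Your route therefore produces $\nu$ depending on $a$, whereas the lemma asserts $\nu=\nu(n)$ with $a$ entering only through $C_1$. These agree only if $a$ is a priori bounded by a function of $n$; that is true in the geometric situations that generate $(V_R)$ (Laplacian comparison gives $a=n-1$, the $q$-Bakry--\'Emery comparison gives $a=n+q-1$), but it is not built into Definition~\ref{VF}, where $a$ is merely an arbitrary uniform constant. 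You either need to invoke such a bound, or argue more directly from the Jacobian comparison---for instance along the lines of the direct derivation of a local Sobolev inequality from Bishop--Gromov comparison, which obtains the exponent from the manifold dimension rather than the metric-measure doubling order. A secondary point: the segment inequality naturally yields an $L^1$-Poincar\'e inequality; passing to the $L^2$-Poincar\'e required for the machinery is a genuine self-improvement step under doubling, not a bare application of Cauchy--Schwarz as you suggest, though it does go through.
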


Previously, similar local Sobolev inequalities were proven in the case of a uniform upper bound on $\Delta_\phi r$. Assumption \eqref{VR}, however, only requires that the integral of $\Delta_\phi r$ on a geodesic ball be bounded and is thus more general. We refer the interested reader to \cite{CL1} for specific conditions on $\text{Ric}_\phi $ and $\phi$ that would guarantee such a uniform upper bound. On a compact manifold they all certainly hold. An interesting question we intend to investigate in future work is to determine the optimal $C_o$ and $\alpha$ of \eqref{Sob}.  We would also like to remark that in the case $\phi\equiv 0$ one can use the existence of a local Sobolev inequality \eqref{Sob2} to find lower bounds for the Neumann eigenvalues of the Laplacian over a geodesic ball. In \cite{Char} such lower estimates were also obtained for the Bochner Laplacian on forms. It would also be interesting to consider the analogous problem on weighted manifolds.

The Sobolev inequality \eqref{Sob} allows us to prove an $L^2$ gradient estimate which together with the heat kernel estimates will be sufficient to prove the eigenvalue lower bounds.

\begin{lemma} \label{L1}
Suppose that \eqref{Sob} holds on $M$. Then for all $ u\in H^1(M)$ that satisfy $\int_M  u =0$
\[
\int_M |\nabla  u|^2\,  e^{-\phi} \geq C_1 \left(\int_M  u^2 \, e^{-\phi} \right)^{\frac{2+\nu}{\nu}} \left(\int_M | u| \, e^{-\phi}  \right)^{-\frac{4}{\nu}}
\]
for a uniform constant $C_1=\frac{\lambda_1}{C_o\,(\lambda_1+\alpha)} \; V_\phi^{\frac{2}{\nu}}$, where $\lambda_1$ is the first nonzero eigenvalue of $M$.
\end{lemma}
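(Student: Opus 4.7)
The plan is to combine three ingredients: the Sobolev hypothesis \eqref{Sob}, a standard interpolation inequality between $L^1$ and the Sobolev exponent, and the Rayleigh quotient characterization of $\lambda_1$, which will absorb the lower-order term appearing on the right-hand side of \eqref{Sob}. Implicit in the hypothesis is the convention that the mean-zero condition $\int_M u=0$ is taken with respect to the weighted measure $e^{-\phi}dV_g$, since otherwise the constant $\lambda_1$, the first nonzero Bakry-\'Emery eigenvalue, could not legitimately appear in $C_1$.

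First I would interpolate $L^2$ between $L^1$ and the Sobolev exponent $L^p$ with $p=\tfrac{2\nu}{\nu-2}$, all norms taken with respect to $e^{-\phi}dV_g$. The identity $\tfrac{1}{2}=(1-s)\cdot 1+\tfrac{s}{p}$ forces $s=\tfrac{\nu}{\nu+2}$, and after raising H\"older to the appropriate power the interpolation reads
\[
\left(\int_M u^2\,e^{-\phi}\right)^{\frac{\nu+2}{\nu}}\left(\int_M |u|\,e^{-\phi}\right)^{-\frac{4}{\nu}} \leq \left(\int_M |u|^{\frac{2\nu}{\nu-2}}\,e^{-\phi}\right)^{\frac{\nu-2}{\nu}},
\]
whose right-hand side is exactly what \eqref{Sob} controls. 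Substituting \eqref{Sob} then converts the right-hand side into $C_o\,V_\phi^{-2/\nu}\int_M\bigl(|\nabla u|^2+\alpha u^2\bigr)\,e^{-\phi}$.

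To eliminate the residual $\alpha u^2$ term I would invoke the mean-zero hypothesis: by the Rayleigh characterization of $\lambda_1$ for $\Delta_\phi$ restricted to the orthogonal complement of the constants one has $\int_M u^2\,e^{-\phi}\leq \lambda_1^{-1}\int_M|\nabla u|^2\,e^{-\phi}$, so the parenthesis is dominated by $\tfrac{\lambda_1+\alpha}{\lambda_1}\int_M|\nabla u|^2\,e^{-\phi}$. Rearranging then yields the claimed inequality with $C_1=\tfrac{\lambda_1 V_\phi^{2/\nu}}{C_o(\lambda_1+\alpha)}$. There is no genuine obstacle here: the computation is routine bookkeeping once the interpolation exponents are identified. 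The one non-mechanical observation is that pairing the mean-zero condition with the Rayleigh quotient is precisely what converts the Sobolev bound, polluted by the spurious term $\alpha\int_M u^2\,e^{-\phi}$, into the scale-consistent gradient estimate stated in the lemma.
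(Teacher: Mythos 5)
Your argument is correct and uses the same three ingredients---the Sobolev hypothesis \eqref{Sob}, the Poincar\'e/Rayleigh bound for weighted mean-zero $u$, and the $L^1$--$L^{2\nu/(\nu-2)}$ interpolation (equivalently, H\"older applied to $u^2=|u|^{4/(\nu+2)}|u|^{2\nu/(\nu+2)}$)---as the paper's proof, differing only in the order in which they are applied. You also state the Poincar\'e inequality in the correct direction, $\int_M u^2\,e^{-\phi}\le\lambda_1^{-1}\int_M|\nabla u|^2\,e^{-\phi}$, and your remark that $\int_M u=0$ must be understood against the weighted measure is accurate; the paper's displayed Poincar\'e line has the inequality sign reversed, a typo, since the subsequent algebra uses the correct form.
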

\begin{proof}
The Sobolev inequality \eqref{Sob} implies
\begin{equation} \label{L1e1}
\int_{M} |\nabla  u|^2  \geq C_o^{-1}\, V_\phi^{\frac{2}{\nu}}\; \left( \int_{M} | u|^{\frac{2\nu}{\nu-2}}\, e^{-\phi} \right)^{\frac{\nu-2}{\nu}} - \alpha  \int_{M}  | u|^2 \,  e^{-\phi}.
\end{equation}
Moreover, whenever $\int_M  u =0$, the definition of $\lambda_1$ gives
\[
 \int_M    u^2\,  e^{-\phi}  \geq \frac{1}{\lambda_1}  \int_M |\nabla  u|^2\,  e^{-\phi}.
\]
Substituting the above inequality to the right side of \eqref{L1e1} and solving for $\int_M |\nabla  u|^2$ we get
\begin{equation} \label{L1e2}
  \int_M |\nabla  u|^2\,  e^{-\phi} \geq \frac{\lambda_1}{C_o\,(\lambda_1+\alpha)}  \; V_\phi^{\frac{2}{\nu}}\; \left( \int_{M} | u|^{\frac{2\nu}{\nu-2}}\,  e^{-\phi} \right)^{\frac{\nu-2}{\nu}}.
\end{equation}
By writing $ u^2 = | u|^{4/(\nu+2)}| u|^{2\nu/(\nu+2)}$ and applying the H\"older inequality with $p=(\nu+2)/4$ and $q=p/(p-1)=(\nu+2)/(\nu-2)$ we get the estimate
\[
 \left(\int_M  u^2 \,  e^{-\phi} \right)^{\frac{2+\nu}{\nu}} \leq \left(\int_M  | u|\,  e^{-\phi}  \right)^{\frac{4}{\nu}} \left(\int_M | u|^{\frac{2\nu}{\nu-2}} \,  e^{-\phi}  \right)^{\frac{\nu-2}{\nu}}.
\]
The lemma follows by solving the above inequality for the second term in the right side and substituting into \eqref{L1e2}.
\end{proof}

\section{Heat kernel estimates and the non-compact case}
We let $H_\phi(x,y,t)$ denote the heat kernel of $\Delta_\phi$ corresponding to the Friedrichs extension. This is certainly unique when $M$ is compact, and on a noncompact manifold it is the smallest positive heat kernel among all other heat kernels that correspond to heat semi-groups of   self-adjoint extensions of $\Delta_\phi$. Both in the Riemannian and in the weighted case, heat kernel estimates are closely related to eigenvalue estimates. When $\phi \equiv 0$ Li and Yau in ~\cite{li-yau} prove upper estimates for the heat kernel of Schr\"odinger operators whenever the Ricci curvature of the manifold is bounded below. One of the key elements in their proof is the Bochner formula and the Cauchy inequality $|\nabla^2u|^2\geq (\Delta u)^2/n$.

In the case $\phi \not\equiv 0$, it was shown by Bakry and \'Emery in ~\cite{BE} that the analogous Bochner formula can be obtained if one takes as the curvature tensor $\textup{Ric}_\phi$, and it is given by
\begin{equation} \label{fBochner}
\Delta_\phi|\nabla u|^2=2 |\nabla^2 u|^2+2\langle\nabla u, \nabla\Delta_\phi u\rangle + 2 \textup{Ric}_\phi(\nabla u, \nabla u).
\end{equation}
Observe that when $\phi\equiv 0$, \eqref{fBochner} becomes the  Bochner formula in the Riemannian case.  The term $\nabla^2 u$ that appears above is the usual Hessian of $u$; we do not have a notion of $\phi$-Hessian. In other words there is no analogous relationship between the Hessian of $u$ and the $\Delta_\phi u$ as in  $|\nabla^2u|^2\geq (\Delta u)^2/n$. As a result, there is more subtlety in obtaining gradient and heat kernel estimates estimate in the $\phi \not\equiv 0$ case. Such estimates can be obtained under various assumptions on the curvature of the manifold. We will present a few of these options.

Bakry and \'Emery also demonstrated that the relevant Ricci tensor for obtaining a gradient estimate is the $q$-Bakry-\'Emery Ricci tensor~\cite{BE} which is defined as
\[
\textup{Ric}_\phi^q=\textup{Ric}+ \nabla^2 \phi-\frac 1q\nabla \phi\otimes\nabla \phi=\textup{Ric}_\phi-\frac 1q\nabla \phi\otimes\nabla \phi
\]
where $q$ is a positive number. By generalizing the Li-Yau  method, Qian was able to prove a Harnack inequality and heat kernel estimates for the Bakry-\'Emery Laplacian whenever $\textup{Ric}_\phi^q\geq 0$ in ~\cite{Qi}. In \cite{CL2} the first two authors found Gaussian estimates for the Bakry-\'Emery heat kernel whenever  $\textup{Ric}_\phi^q$ is bounded below. This was done by associating to the weighted manifold a family of warped product spaces $\tilde M_\epsilon$ and showing that the geometric analysis results on  $M$ are closely related to those  on $\tilde M_\epsilon$. In particular, the heat kernel estimates on $\tilde M_\epsilon$ implied the Bakry-\'Emery  heat kernel estimates on the weighted manifold.
\begin{theorem}[C. -L.] \label{Tb1}
Let  $(M^n,g,\phi)$    be a Bakry-\'Emery manifold  such that for some positive integer $q$,
\[
\textup{Ric}^q_\phi\geq -K
\]
on $B_{x_o}(4R+4)\subset M$. Then for any  $x,y\in B_{x_o}(R/4)$, $t<R^2/4$ and $\delta\in(0,1)$
\begin{equation*}
\begin{split}
 C_6  \, &    V_\phi^{-1/2}(x,\sqrt{t})\,  V_\phi^{-1/2}(y,\sqrt{t}) \cdot \exp{ [\,-C_7\, \frac{ {d}^2(x,y)}{t} - C_8 \,K \,t \,]}\\
& \leq H_\phi(x,y,t) \\
& \leq   \;C_3    \;   V_\phi^{-1/2}(x,\sqrt{t}) \,V_\phi^{-1/2}(y,\sqrt{t})
 \cdot  \exp [\,-\lambda_{1,\phi}(M)\,t  -\frac{d^2(x,y)}{C_4 \,t} + C_5\,\sqrt{K\,t} \,]
\end{split}
\end{equation*}
for some positive constants $C_3,  C_4, C_6$ and $C_7$ that only depend on  $\delta$ and $n+q$ and positive constants $C_5, C_8$ that only depend on $n+q,$ and where $\lambda_{1,\phi}(M)$ is the infimum of the weighted Rayleigh quotient on $M$.

Whenever $\textup{Ric}^q_\phi\geq -K$ on $M$, then the same bound also holds for all $x,y \in M$ and $t>0$.
\end{theorem}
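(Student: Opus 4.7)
My plan is to reduce the theorem to the classical two-sided Gaussian heat kernel bounds of Li-Yau, Grigoryan, and Saloff-Coste on an auxiliary Riemannian manifold of dimension $n+q$, following the warped product strategy of \cite{CL2} that originates in Qian's work.

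First I construct the auxiliary manifold. Fix a closed $q$-dimensional Riemannian manifold $(N, g_N)$ of nonnegative Ricci curvature (for instance a flat torus) and equip $\tilde M_\epsilon := M \times N$ with the warped product metric $\tilde g_\epsilon = g \oplus \epsilon^2 e^{-2\phi/q} g_N$. Three standard warped-product identities are crucial. The Riemannian volume form factors as $dV_{\tilde g_\epsilon} = \epsilon^q e^{-\phi}\, dV_g\, dV_N$, so integration along the fiber converts the Riemannian measure on $\tilde M_\epsilon$ into the weighted measure on $M$ up to the constant $\epsilon^q \mathrm{Vol}(N)$. For any function $u$ pulled back from $M$, the Laplace-Beltrami operator of $\tilde g_\epsilon$ satisfies $\tilde \Delta_\epsilon u = \Delta u - \langle \nabla \phi, \nabla u \rangle = \Delta_\phi u$. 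Finally, a direct computation with $f = \epsilon e^{-\phi/q}$ shows that the horizontal-horizontal block of $\mathrm{Ric}_{\tilde g_\epsilon}$ equals $\mathrm{Ric}_\phi^q$ exactly, while the vertical and mixed blocks are controlled on $B_{x_o}(4R+4)$ by choosing $\epsilon$ small so that the $O(\epsilon^2)$ contributions involving $\phi$ are absorbed into the lower bound $-K$. Consequently $\mathrm{Ric}_{\tilde g_\epsilon} \geq -K$ on $B_{x_o}(4R+4) \times N$.

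Next I apply the classical machinery on $\tilde M_\epsilon$: the Ricci lower bound implies volume doubling via Bishop-Gromov and an $L^2$ Poincar\'e inequality via Buser on concentric balls inside the curvature-controlled region, and Saloff-Coste's theorem then produces a two-sided Gaussian bound for the Riemannian heat kernel $\tilde H_\epsilon$ of exactly the form stated in the theorem, with constants depending only on $\delta$ and the total dimension $n+q$. To descend from $\tilde M_\epsilon$ to $M$, I use that a function pulled back from $M$ solves the heat equation for $\tilde \Delta_\epsilon$ iff it solves the Bakry-\'Emery heat equation on $M$; by uniqueness of the fundamental solution this forces
\[
H_\phi(x, y, t) = \int_N \tilde H_\epsilon\bigl((x, z_0), (y, z), t\bigr)\, dV_N(z)
\]
for any $z_0 \in N$. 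As $\epsilon \to 0$ the warped product distance $d_{\tilde g_\epsilon}((x, z_0), (y, z))$ converges to $d_g(x, y)$ and the volume $\tilde V_\epsilon((x, z), \sqrt t)$ to $\epsilon^q \mathrm{Vol}(N)\, V_\phi(x, \sqrt t)$; substituting the bounds for $\tilde H_\epsilon$ into the fiber-integration identity and passing to the limit delivers the claimed bounds on $H_\phi$.

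The main obstacle will be ensuring that all constants remain uniform in $\epsilon$, and in particular that the spectral factor $\lambda_1(\tilde M_\epsilon)$ appearing in the Grigoryan-type upper bound converges to $\lambda_{1,\phi}(M)$, so that the factor $\exp(-\lambda_{1,\phi}(M)\,t)$ is retained in the limit. This is a spectral collapsing argument in the spirit of Theorem~\ref{th-mrl}, but adapted to the $q$-dimensional warped fiber rather than the one-dimensional collar. A secondary difficulty is the non-compact situation, where one must check that the minimal positive heat kernel on $\tilde M_\epsilon$ descends under fiber integration to the minimal positive Bakry-\'Emery heat kernel on $M$, so that the Friedrichs extensions on the two sides correspond correctly.
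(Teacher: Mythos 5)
Your high-level strategy — lifting the Bakry–\'Emery heat equation to a warped product over $M$, applying the classical Li–Yau/Saloff-Coste two-sided Gaussian bounds there, descending by fiber integration, and passing to the limit — is exactly the route the paper attributes to \cite{CL2}, and your identities for the volume form, the restriction $\tilde\Delta_\epsilon u=\Delta_\phi u$, and the horizontal Ricci block $\mathrm{Ric}_{\tilde g_\epsilon}\big|_{\text{hor}}=\mathrm{Ric}^q_\phi$ are correct.

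However, there is a genuine gap in your treatment of the vertical Ricci. For the warped product $M\times_f N$ with $f=\epsilon e^{-\phi/q}$ and $N$ flat, the O'Neill/Besse formula gives, for a $\tilde g_\epsilon$-unit vertical vector $V$,
\[
\mathrm{Ric}_{\tilde g_\epsilon}(V,V)\;=\;-\frac{\Delta f}{f}-(q-1)\frac{|\nabla f|^2}{f^2}\;=\;\frac{1}{q}\bigl(\Delta\phi-|\nabla\phi|^2\bigr)\;=\;\frac{1}{q}\,\Delta_\phi\phi,
\]
which is \emph{independent of $\epsilon$}: the ratios $\Delta f/f$ and $|\nabla f|^2/f^2$ do not scale with $\epsilon$ at all. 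So the claim that the vertical contributions are $O(\epsilon^2)$ and can be absorbed into $-K$ is false, and in particular the hypothesis $\mathrm{Ric}^q_\phi\ge -K$ gives no control whatsoever on $\Delta_\phi\phi$, so you cannot conclude $\mathrm{Ric}_{\tilde g_\epsilon}\ge -K$ on $B_{x_o}(4R+4)\times N$ for small $\epsilon$. The fix is to take a fiber $N$ with \emph{strictly positive} Ricci curvature — the paper's remark in Section 2.2 points to a warped product with the unit ball, and $S^q$ works just as well — because then the vertical Ricci picks up the extra term $\mathrm{Ric}_N(V,V)/f^2\sim c\,e^{2\phi/q}/\epsilon^2>0$, which dominates the $\epsilon$-independent term $\frac{1}{q}\Delta_\phi\phi$ on any compact ball once $\epsilon$ is small, rendering the full Ricci tensor of $\tilde M_\epsilon$ bounded below by $-K$. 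As a secondary point, your fiber-integration formula should carry the factor $\epsilon^q$ coming from $dV_{\tilde g_\epsilon}=\epsilon^q e^{-\phi}\,dV_g\,dV_N$, namely $H_\phi(x,y,t)=\epsilon^q\int_N\tilde H_\epsilon\bigl((x,z_0),(y,z),t\bigr)\,dV_N(z)$; this cancels against the corresponding $\epsilon^q\mathrm{Vol}(N)$ in $\tilde V_\epsilon$, so the final bounds survive, but the bookkeeping as written is off.
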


The proof of the above theorem illustrated the strong geometric connection between $M$ and the warped product spaces $\tilde M_\epsilon$ and the fact that the Bakry-\'Emery Laplacian and the $q$-Bakry-\'Emery Ricci tensor are projections (in some sense) of the  Laplacian and Ricci tensor of a higher dimensional space. As it was also remarked in \cite{CL2}, one could not get the above estimate by only assuming $\textup{Ric}_\phi$ bounded below and $\phi$ of linear growth at a point (which would be enough for gradient estimates as in \cite{MuW2}). Instead, the Bakry-\'Emery  heat kernel estimate requires  an assumption on the {\it uniform} linear growth of $\phi$, which is almost equivalent to assuming that the gradient of $\phi$ is bounded.

\subsection{The essential spectrum}
On noncompact manifolds, a more interesting part of the spectrum is the essential spectrum of the manifold. In general, the $L^2$  spectrum of $\Delta_\phi$,  denoted $\sigma(\Delta_\phi)$,  consists of all points $\lambda\in \mathbb{C}$ for which $\Delta_\phi-\lambda I$ fails to be invertible on $L^2$. Since $\Delta_\phi$ is nonnegative definite on $L^2$, $\sigma(\Delta_\phi)$  is contained in $[0,\infty)$.   The  essential spectrum of  $\Delta_\phi$ on $L^2$, $\sigma_\textup{ess}(\Delta_\phi)$, consists of the cluster points in the spectrum and of isolated eigenvalues of  infinite multiplicity.  One is usually interested in finding sufficient conditions on the manifold so that $\sigma_\textup{ess}(\Delta)=[0,\infty)$.

In the case $\phi=0$, it  was extremely difficult to directly compute the $L^2$ spectrum using the classical Weyl's criterion without assuming very strong decay conditions on the curvature of the manifold. By generalizing the Weyl's criterion, the first two authors were able to show that a sufficient condition for such a result is that the Ricci curvature of the manifold is asymptotically nonnegative \cite{CL3}. The authors were also able to show that in the case of a weighted manifold $\sigma_\textup{ess}(\Delta_\phi)=[0,\infty)$ whenever the $q$-Bakry-\'Emery Ricci tensor is asymptotically nonnegative \cite{CL2}. In the same article, they also showed an $L^p$ independence result for the essential spectrum of $\Delta_\phi$ on $L^p$ whenever $\textup{Ric}_\phi^q$  is bounded below, and the weighted volume of the manifold grows uniformly subexponentially \cite{CL2}. As in the classical case, the latter result is a consequence of the gaussian estimates for the heat kernel \cite{sturm}. It would be interesting to find a weighted space analog to the hyperbolic space on which the $L^p$  essential spectrum of $\Delta_\phi$ depends on $p$, for which the underlying Riemannian manifold is not hyperbolic (see \cite{DST} for the classical case).

As mentioned previously, in the noncompact case, $\textup{Ric}_\phi^q$ bounded below is not equivalent to $\textup{Ric}_\phi$ bounded below, since $\phi$ and its gradient are not necessarily bounded. As a result, if one would like to assume instead $\textup{Ric}_\phi\geq -K$, then some control on $\phi$ is required. Apart from the Bochner formula, the other main ingredient for obtaining the heat kernel estimates by the Li-Yau method in the Riemannian case is the Laplacian comparison theorem. In particular, the fact that whenever the Ricci curvature of the manifold is bounded below, then there exist uniform constants $a, b$ such that
\[
\Delta  r(x,y) \leq  \frac{a}{r(x,y)} + b
\]
where $r(x,y)=d(x,y))$. In the case of weighted manifolds however, ${\rm Ric}_\phi (x)\geq-  K $ on a ball around $x_o$ does not imply a uniform Bakry-\'Emery Laplacian estimate $\Delta_\phi r(x,y) \leq C\frac{1}{r(x,y)} + b$  without strong restrictions on $\phi$. However, using the techinique of Saloff-Coste as in  \cite{SCbk}, one can use the local Sobolev inequality \eqref{Sob2} to prove a mean value inequality for $\phi$-subharmonic functions as well as a mean value inequality for solutions to the Bakry-\'Emery  heat equation. In \cite{CL2} these were used to prove a Gaussian estimate for the Bakry-\'Emery  heat kernel.
\begin{theorem}[C. -L.] \label{T2}
Let  $(M^n,g,\phi)$    be a Bakry-\'Emery manifold  that satisfies the property \eqref{VR} for all $x\in B_{x_o}(R)$.
Let $H_\phi(x,y,t)$ denote the minimal Bakry-\'Emery  heat kernel  defined on $M\times M\times (0,\infty)$
Then for any  $\epsilon >0$ there exist constants $c_1(n,\epsilon), c_2(n)$ such that
\[
H_\phi(x,y,t) \leq  c_1  \, V_\phi^{-1/2}(x,\sqrt{t}) \; V_\phi^{-1/2}(y,\sqrt{t}) \; \exp[-\lambda_{1,\phi}(M) t  - \frac{d^2(x,y)}{4(1+\epsilon)\, t} + c_2\, A(R)]
\]
for any $x,y \in B_{x_o}(R/2)$ and $0<t<R^2/4$.
\end{theorem}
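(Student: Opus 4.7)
The plan is to follow the Saloff--Coste strategy, adapted to the weighted setting, whose starting input is precisely the local Sobolev inequality \eqref{Sob2} supplied by the previous lemma (applied to all balls $B_x(r)$ with $x\in B_{x_0}(R/2)$ and $r\leq \sqrt{t}\leq R/2$). From that Sobolev inequality I would first run a Moser-type iteration on the weighted $L^p$ norms of nonnegative subsolutions of the Bakry--\'Emery heat equation $(\partial_t - \Delta_\phi)v \leq 0$, using that the natural energy identity
\[
\frac{d}{dt}\int_M v^2 e^{-\phi} + 2\int_M |\nabla v|^2 e^{-\phi}\leq 0
\]
fits the weighted Sobolev inequality without any Riemannian Ricci input. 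This yields a parabolic mean value inequality: for $x\in B_{x_0}(R/2)$, $0<t<R^2/4$ and $v\geq 0$ a subsolution on $B_x(\sqrt{t})\times(t/2,t)$,
\[
v(x,t)^2\leq \frac{c(n)\,e^{c_2 A(R)}}{t\,V_\phi(x,\sqrt{t})}\int_{t/2}^t\!\!\int_{B_x(\sqrt{t})} v^2\, e^{-\phi}\,ds.
\]

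Applying this to $y\mapsto H_\phi(x,y,s)$ (which is a solution of the Bakry--\'Emery heat equation in $y$) and using $\int H_\phi(x,\cdot,s)^2 e^{-\phi}=H_\phi(x,x,2s)$ by symmetry and the semigroup property gives the on-diagonal estimate
\[
H_\phi(x,x,t)\leq \frac{c(n)\,e^{c_2 A(R)}}{V_\phi(x,\sqrt{t})}.
\]
Then the off-diagonal prefactor follows from Cauchy--Schwarz applied to $H_\phi(x,y,t)=\int H_\phi(x,z,t/2)H_\phi(z,y,t/2)e^{-\phi(z)}dz$, yielding
\[
H_\phi(x,y,t)\leq H_\phi(x,x,t)^{1/2}H_\phi(y,y,t)^{1/2}\leq c(n)\,e^{c_2 A(R)}\,V_\phi(x,\sqrt{t})^{-1/2}V_\phi(y,\sqrt{t})^{-1/2}.
\]

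To insert the Gaussian factor $\exp[-d^2(x,y)/(4(1+\epsilon)t)]$, I would apply Davies' perturbation method in the weighted setting. For a bounded Lipschitz function $\psi$ on $M$ with $|\nabla\psi|\leq 1$ and any $\alpha\in\mathbb R$, the conjugated operator $e^{\alpha\psi}\Delta_\phi e^{-\alpha\psi}$ is self-adjoint on $L^2(e^{-\phi}dV_g)$ and satisfies
\[
\langle -e^{\alpha\psi}\Delta_\phi e^{-\alpha\psi}u,u\rangle_\phi = \int|\nabla u|^2e^{-\phi}-\alpha^2\!\int\!|\nabla\psi|^2u^2 e^{-\phi},
\]
so that the perturbed semigroup has norm at most $e^{\alpha^2 t}$ on weighted $L^2$. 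Combining this with the on-diagonal bound via the standard Davies--Gaffney argument (splitting $e^{-t\Delta_\phi}=e^{-(t/2)\Delta_\phi}e^{-(t/2)\Delta_\phi}$ and taking $\psi$ to be a cutoff of $d(x,\cdot)$) gives the pointwise estimate with Gaussian factor $\exp[-\alpha\, d(x,y)+\alpha^2 t]$; optimizing $\alpha$ yields $\exp[-d^2(x,y)/(4t)]$, and absorbing an arbitrarily small error into the constants produces the sharper denominator $4(1+\epsilon)t$ for any $\epsilon>0$. Finally, the factor $e^{-\lambda_{1,\phi}(M) t}$ is inserted by replacing $\Delta_\phi$ with $\Delta_\phi-\lambda_{1,\phi}(M)\geq 0$ throughout: the shifted semigroup $e^{t\lambda_{1,\phi}}e^{-t\Delta_\phi}$ is still a contraction on $L^2(e^{-\phi}dV_g)$ by definition of $\lambda_{1,\phi}$, so all of the above estimates apply to it verbatim.

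The main technical obstacle is carrying out Moser iteration using only the integrated/volume comparison hypothesis \eqref{VR}, rather than a pointwise Bakry--\'Emery Laplacian comparison, and verifying that the Sobolev constant in \eqref{Sob2} scales correctly along the iteration so that the resulting dependence on $R$ appears only through the single factor $e^{c_2 A(R)}$. In particular, one must ensure that at every scale $r\leq\sqrt{t}\leq R/2$ the required local Sobolev inequality is available with a uniform exponent $\nu=\nu(n)$ and uniform constants, which is exactly what the lemma provides under \eqref{VR}; the Davies perturbation and spectral shift steps are then essentially formal, since $\Delta_\phi$ is self-adjoint with respect to $e^{-\phi}dV_g$ and the weighted integration by parts identities parallel the Riemannian case exactly.
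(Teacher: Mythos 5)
Your outline follows the same Saloff--Coste program that the paper indicates for Theorem~\ref{T2}: the volume comparison hypothesis \eqref{VR} produces the local Sobolev inequality \eqref{Sob2}, which is then fed into a parabolic Moser iteration to obtain a mean value inequality for solutions of the Bakry--\'Emery heat equation, from which on-diagonal heat kernel bounds follow, and Davies' weighted perturbation supplies the Gaussian off-diagonal decay. That is precisely the route the paper references (the mean value inequalities for $\phi$-subharmonic functions and Bakry--\'Emery caloric functions obtained via Saloff--Coste's method, then the Gaussian bound). Two small points should be cleaned up: for the Moser iteration one must use local energy estimates with space-time cutoffs (the global identity you write down is not by itself enough to produce the ball-localized mean value inequality), and the Sobolev constant must be tracked through the iteration so that the dependence on $R$ collects into a single $e^{c_2A(R)}$.

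The step that is genuinely glossed over is the insertion of $e^{-\lambda_{1,\phi}(M)t}$. Your claim that the shifted semigroup $e^{t\lambda_{1,\phi}}e^{-t\Delta_\phi}$ lets the earlier estimates go through ``verbatim'' is not correct as stated: $e^{s\lambda_{1,\phi}}H_\phi(x,\cdot,s)$ solves $(\partial_s-\Delta_\phi)u=\lambda_{1,\phi}u$, not the heat equation, so the Moser iteration does not transfer unchanged (it acquires a zero-order term), and the Sobolev-based on-diagonal bound does not see the spectral bottom. The standard repair is the semigroup splitting
\[
H_\phi(x,y,t)\leq H_\phi(x,x,2\delta t)^{1/2}\,\bigl\|e^{-(1-2\delta)t\Delta_\phi}\bigr\|_{L^2_\phi\to L^2_\phi}\,H_\phi(y,y,2\delta t)^{1/2}
= H_\phi(x,x,2\delta t)^{1/2}\,e^{-(1-2\delta)\lambda_{1,\phi}t}\,H_\phi(y,y,2\delta t)^{1/2},
\]
which only yields $e^{-(1-2\delta)\lambda_{1,\phi}t}$ together with a degraded Gaussian constant from the shortened time $2\delta t$; the full $e^{-\lambda_{1,\phi}t}$ is recovered in the limit $\delta\to0$, and it is exactly the $1+\epsilon$ in the Gaussian denominator that absorbs this loss. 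So the $\epsilon$-freedom in the statement is not a cosmetic slack from Davies' method alone, but is consumed in reconciling the spectral factor with the Gaussian factor; this should be made explicit rather than asserted to be ``essentially formal.''
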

For a comprehensive review of heat kernel bounds on noncompact weighted manifolds we refer the interested reader to the extensive summary of results due to Grigor\'{ }yan in \cite{Grig2} as well as the book \cite{Grig1},  where one can find various equivalence relationships between gaussian heat kernel bounds, Poincar\'e inequalities and volume doubling, the relative Faber Krahn inequality as well as the Harnack inequality.

\subsection{Proof of Theorem \ref{thm4}}
\begin{proof}

Let $H_\phi(x,y,t)$ be the heat kernel of the Bakry-\'Emery Laplacian on $M$ (Neumann kernel in case of boundary). Using the eigenvalues of $\Delta_\phi$, the Bakry-\'Emery heat kernel has the following expression
\[
H_\phi(x,y,t)=\sum_{i=0}^{\infty} e^{-\lambda_i t} \phi_i(x) \phi_i(y)
\]
where $\phi_i(x)$ is the eigenfunction corresponding to $\lambda_i$ chosen such that $\{\phi_i\}_{i=0}^\infty$ are orthonormal in the weighted $L^2$ norm. Note that $\lambda_0=0$ and $\phi_0= V_\phi^{-1/2}$, and as a result $\int_M \phi_i(y) \, e^{-\phi(y)} =0$ for all $i\geq 1$. To obtain lower estimates for the eigenvalues, the idea is to find a uniform upper bound for the trace of the Bakry-\'Emery  heat kernel, namely $ H_\phi(x,x,t)$. In fact, it will sufficient to find a uniform upper bound for
\[
G(x,y,t) = H_\phi(x,y,t)-\frac{1}{V_\phi} =\sum_{i=1}^{\infty} e^{-\lambda_i t} \phi_i(x) \phi_i(y)
\]
from the above remark. We observe that $G$ satisfies
\[
\int_M G(x,y,t)  \, e^{-\phi(y)} =0
\]
and the semigroup property
\[
G(x,z,t+s) = \int_M G(x,y,t) \, G(y,z,s) \; e^{-\phi(z)}
\]
for all $x,z \in M$ and $t, s \in [0,\infty)$. The definition of $G$ together with the properties $\int_M H_\phi(x,y,t)\, e^{-\phi(y)}=1$ and $H_\phi(x,y,t)\geq 0$ imply
\begin{equation} \label{thm2e1}
\int_M |G(x,y,t)|\, e^{-\phi(y)} \leq \int_M |H_\phi(x,y,t)|\, e^{-\phi(y)} +1 = 2.
\end{equation}
From the semigroup property
\[
G(x,x,t) = \int_M G(x,y,t/2) \, G(x,y,t/2) \; e^{-\phi(y)}.
\]
Differentiating both sides with respect to $t$ we obtain
\begin{equation*}
\begin{split}
G'(x,x,t) &=  \int_M G'(x,y,t/2) \, G(x,y,t/2) \; e^{-\phi(y)}\\
&=   \int_M \Delta_{\phi,y}G(x,y,t/2) \, G(x,y,t/2) \; e^{-\phi(y)}
\end{split}
\end{equation*}
since $G$ also solves the heat equation. Integration by parts now gives
\begin{equation*}
\begin{split}
-G'(x,x,t)&=   \int_M |\nabla_{y}G(x,y,t/2)|^2 \; e^{-\phi(y)} \\
& \geq 2^{-\frac{4}{\nu}} C_1   \left(\int_M |G(x,y,t/2)|^2 \; e^{-\phi(y)} \right)^{\frac{2+\nu}{\nu}}
\end{split}
\end{equation*}
by Lemma \ref{L1} and equation \eqref{thm2e1}. By the semigroup property of $G$,
\[
-G'(x,x,t)\, (G(x,x,t))^{-(2+\nu)/\nu} \geq   2^{-4/\nu}\,   C_1
\]
Integrating both sides with respect to $t$ gives
\[
\frac{\nu}{2}\,(G(x,x,t))^{-2/\nu} \geq  2^{-4/\nu}\,  C_1   \, t
\]
since $G$ tends to $+\infty$ as $t$ goes to zero. Finally we have the upper estimate
\[
G(x,x,t)\leq  4\,\left( \frac{\nu}{2\, C_1} \right)^{\frac{\nu}{2}} \; t^{-\frac{\nu}{2}}.
\]
If we now integrate both sides with respect to $x$, the eigenvalue expansion for $G$ gives
\[
\sum_{i=1}^{\infty} e^{-\lambda_i t} \leq  4\,\left( \frac{\nu}{2\, C_1} \right)^{\frac{\nu}{2}} \; t^{-\frac{\nu}{2}}\; V_\phi.
\]
Setting $t=1/\lambda_k$ we get
\[
4\,\left( \frac{\nu\, \lambda_k}{2\, C_1} \right)^{\frac{\nu}{2}} \;   V_\phi \geq \sum_{i=1}^{\infty} e^{-\lambda_i /\lambda_k} \geq \frac{k}{e}
\]
since $\lambda_i /\lambda_k\leq 1$ for $i\leq 1$. The lower estimate follows.

In case $\partial M \neq \emptyset$, the proof is essentially identical and is left to the reader, one would just have to consider the Neumann or Dirichlet heat kernel, and the appropriate Sobolev inequality.
\end{proof}

\begin{acknowledgement}
The second author is partially supported by the   NSF grant DMS-12-06748.  The third author gratefully acknowledges the support of the Leibniz Universit\"at Hannover and the Australian National University.
\end{acknowledgement}
%


%
%
 
 \section{References} 
\begin{biblist}

\bib{ac}{article}{
   author={Andrews, Ben},
   author={Clutterbuck, Julie},
   title={Proof of the fundamental gap conjecture},
   journal={J. Amer. Math. Soc.},
   volume={24},
   date={2011},
   number={3},
   pages={899--916},
   issn={0894-0347},
   review={\MR{2784332 (2012d:35051)}},
   doi={10.1090/S0894-0347-2011-00699-1},
}

\bib{a-ni}{article}{
   author={Andrews, Ben},
   author={Ni, Lei},
   title={Eigenvalue comparison on Bakry-Emery manifolds},
   journal={Comm. Partial Differential Equations},
   volume={37},
   date={2012},
   number={11},
   pages={2081--2092},
   issn={0360-5302},
   review={\MR{3005536}},
   doi={10.1080/03605302.2012.668602},
}

\bib{BE}{article}{
   author={Bakry, Dominique},
   author={{\'E}mery, Michel},
   title={Diffusions hypercontractives},
   language={French},
   conference={
      title={S\'eminaire de probabilit\'es, XIX, 1983/84},
   },
   book={
      series={Lecture Notes in Math.},
      volume={1123},
      publisher={Springer},
      place={Berlin},
   },
   date={1985},
   pages={177--206},
}
		
\bib{bakry-qian}{article}{
   author={Bakry, Dominique},
   author={Qian, Zhongmin},
   title={Some new results on eigenvectors via dimension, diameter, and
   Ricci curvature},
   journal={Adv. Math.},
   volume={155},
   date={2000},
   number={1},
   pages={98--153},
}

\bib{bv}{article}{
   author={Baudoin, Fabrice},
   author={Vatamanelu, Alice},
   title={A note on lower bounds estimates for the Neumann eigenvalues of
   manifolds with positive Ricci curvature},
   journal={Potential Anal.},
   volume={37},
   date={2012},
   number={1},
   pages={91--101},
   issn={0926-2601},
   review={\MR{2928240}},
   doi={10.1007/s11118-011-9251-z},
}

\bib{bus}{book}{ author={B\'erard, P. H.}, title={Spectral geometry:  direct and inverse problems}, volume={41}, series={Monografi\'ias de Matem\'atica}, publisher={Instituto de Matem\'atica Pura e Aplicada}, place={Rio de Janeiro}, year={1986}, note={With appendices by G\'erard Besson, B\'erard and Marcel Berger}}

\bib{CL3}{unpublished}{
author={Charalambous, Nelia},
author={Lu, Zhiqin},
title={On the spectrum of the Laplacians},
note={To appear in Math. Ann.},
}

\bib{CL2}{unpublished}{
author={Charalambous, Nelia},
author={Lu, Zhiqin},
title={Heat kernel estimates and the essential spectrum on weighted manifolds},
note={To appear in J. Geom. Anal.},
}

\bib{CL1}{unpublished}{
author={Charalambous, Nelia},
author={Lu, Zhiqin},
title={The $L^1$ Liouville Property on Weighted manifolds},
note={arXiv: 1402.6170},
}

\bib{Char}{article}{
   author={Charalambous, Nelia},
   title={Eigenvalue estimates for the Bochner Laplacian and harmonic forms
   on complete manifolds},
   journal={Indiana Univ. Math. J.},
   volume={59},
   date={2010},
   number={1},
   pages={183--206},
   issn={0022-2518},
   review={\MR{2666477 (2011j:58060)}},
   doi={10.1512/iumj.2010.59.3770},
}

\bib{cheng}{article}{
   author={Cheng, Shiu Yuen},
   title={Eigenvalue comparison theorems and its geometric applications},
   journal={Math. Z..},
   volume={143},
   date={1975},
   number={3},
   pages={289--297},
}

\bib{CheLi}{article}{
   author={Cheng, Shiu Yuen},
   author={Li, Peter},
   title={Heat kernel estimates and lower bound of eigenvalues},
   journal={Comment. Math. Helv.},
   volume={56},
   date={1981},
   number={3},
   pages={327--338},
   issn={0010-2571},
   review={\MR{639355 (83b:58076)}},
   doi={10.1007/BF02566216},
}

\bib{lott}{article}{
   author={Lott, John},
   title={Some geometric properties of the Bakry-\'Emery-Ricci tensor},
   journal={Comment. Math. Helv.},
   volume={78},
   date={2003},
   number={4},
   pages={865--883},
}

\bib{Mchu}{article}{
   author={Chu, Yawei},
   title={Nonexistence of nontrivial quasi-Einstein metrics},
   journal={Kodai Math. J.},
   volume={35},
   date={2012},
   number={2},
   pages={374--381},
   issn={0386-5991},
   review={\MR{2951263}},
   doi={10.2996/kmj/1341401057},
}

\bib{cold}{article}{
   author={Colding, Tobias Holck},
   title={New monotonicity formulas for Ricci curvature and applications. I},
   journal={Acta Math.},
   volume={209},
   date={2012},
   number={2},
   pages={229--263},
   issn={0001-5962},
   review={\MR{3001606}},
   doi={10.1007/s11511-012-0086-2},
}

\bib{DST}{article}{
   author={Davies, E. B.},
   author={Simon, Barry},
   author={Taylor, Michael},
   title={$L^p$ spectral theory of Kleinian groups},
   journal={J. Funct. Anal.},
   volume={78},
   date={1988},
   number={1},
   pages={116--136},
   issn={0022-1236},
}

\bib{Dod2}{article}{
   author={Dodziuk, Jozef},
   title={Eigenvalues of the Laplacian and the heat equation},
   journal={Amer. Math. Monthly},
   volume={88},
   date={1981},
   number={9},
   pages={686--695},
   issn={0002-9890},
   review={\MR{643271 (83d:35125)}},
   doi={10.2307/2320674},
}

\bib{do-li}{article}{
   author={Donnelly, Harold},
   author={Li, Peter},
   title={Lower bounds for the eigenvalues of Riemannian manifolds},
   journal={Michigan Math. J.},
   volume={29},
   date={1982},
   number={2},
   pages={149--161},
   issn={0026-2285},
   review={\MR{654476 (83g:58069)}},
}

\bib{fed}{article}{   Author = {Federbush, P.},
    Title = {Partially alternate derivation of a result of Nelson},
    Journal = {J. Math. Phys.},
    Volume = {10},
    Pages = {50--52},
    Year = {1969},
}

\bib{funshi}{article}{
   author={Funano, Kei},
   author={Shioya, Takashi},
   title={Concentration, Ricci curvature, and eigenvalues of Laplacian},
   journal={Geom. Funct. Anal.},
   volume={23},
   date={2013},
   number={3},
   pages={888--936},
   issn={1016-443X},
   review={\MR{3061776}},
   doi={10.1007/s00039-013-0215-x},
}

\bib{futaki-sano}{article}{
   author={Futaki, Akito},
   author={Sano, Yuji},
   title={Lower diameter bounds for compact shrinking Ricci solitons},
   journal={Asian J. Math.},
   volume={17},
   date={2013},
   number={1},
   pages={17--31},
   issn={1093-6106},
   review={\MR{3038723}},
   doi={10.4310/AJM.2013.v17.n1.a2},
}

\bib{Grig1}{book}{
   author={Grigor'yan, Alexander},
   title={Heat kernel and analysis on manifolds},
   series={AMS/IP Studies in Advanced Mathematics},
   volume={47},
   publisher={American Mathematical Society, Providence, RI; International
   Press, Boston, MA},
   date={2009},
   pages={xviii+482},
   isbn={978-0-8218-4935-4},
   review={\MR{2569498 (2011e:58041)}},
}

\bib{Grig2}{article}{
   author={Grigor{\cprime}yan, Alexander},
   title={Heat kernels on weighted manifolds and applications},
   conference={
      title={The ubiquitous heat kernel},
   },
   book={
      series={Contemp. Math.},
      volume={398},
      publisher={Amer. Math. Soc., Providence, RI},
   },
   date={2006},
   pages={93--191},
   review={\MR{2218016 (2007a:58028)}},
   doi={10.1090/conm/398/07486},
}

\bib{gr07}{book}{
   author={Gromov, Misha},
   title={Metric structures for Riemannian and non-Riemannian spaces},
   series={Modern Birkh\"auser Classics},
   edition={Reprint of the 2001 English edition},
   note={Based on the 1981 French original;
   With appendices by M. Katz, P. Pansu and S. Semmes;
   Translated from the French by Sean Michael Bates},
   publisher={Birkh\"auser Boston Inc.},
   place={Boston, MA},
   date={2007},
   pages={xx+585},
   isbn={978-0-8176-4582-3},
   isbn={0-8176-4582-9},
   review={\MR{2307192 (2007k:53049)}},
}

\bib{gr}{article}{
   author={Gross, Leonard},
   title={Logarithmic Sobolev inequalities},
   journal={Amer. J. Math.},
   volume={97},
   date={1975},
   number={4},
   pages={1061--1083},
   issn={0002-9327},
   review={\MR{0420249 (54 \#8263)}},
}

\bib{hass}{article}{
   author={Hassannezhad, Asma},
   title={Eigenvalues of perturbed Laplace operators on compact manifolds},
   note={arXiv 1210.7713v2},
   date={2013}
}

\bib{ii-1}{article}{
   author={El Soufi, Ahmad},
   author={Ilias, Sa{\"{\i}}d},
   title={Second eigenvalue of Schr\"odinger operators and mean curvature},
   journal={Comm. Math. Phys.},
   volume={208},
   date={2000},
   number={3},
   pages={761--770},
   issn={0010-3616},
   review={\MR{1736334 (2001g:58050)}},
   doi={10.1007/s002200050009},
}

\bib{ii-2}{article}{
   author={El Soufi, Ahmad},
   author={Harrell, Evans M., II},
   author={Ilias, Sa{\"{\i}}d},
   title={Universal inequalities for the eigenvalues of Laplace and
   Schr\"odinger operators on submanifolds},
   journal={Trans. Amer. Math. Soc.},
   volume={361},
   date={2009},
   number={5},
   pages={2337--2350},
   issn={0002-9947},
   review={\MR{2471921 (2010e:58032)}},
   doi={10.1090/S0002-9947-08-04780-6},
}

\bib{kroger}{article}{
   author={Kr{\"o}ger, Pawel},
   title={On the spectral gap for compact manifolds},
   journal={J. Differential Geom.},
   volume={36},
   date={1992},
   number={2},
   pages={315--330},
}

\bib{li-yau12}{article}{
   author={Li, Peter},
   author={Yau, Shing Tung},
   title={Estimates of eigenvalues of a compact Riemannian manifold},
   conference={
      title={Geometry of the Laplace operator},
      address={Proc. Sympos. Pure Math., Univ. Hawaii, Honolulu, Hawaii},
      date={1979},
   },
   book={
      series={Proc. Sympos. Pure Math., XXXVI},
      publisher={Amer. Math. Soc., Providence, R.I.},
   },
   date={1980},
   pages={205--239},
   review={\MR{573435 (81i:58050)}},
}

\bib{li-yau}{article}{
   author={Li, Peter},
   author={Yau, Shing-Tung},
   title={On the parabolic kernel of the Schr\"odinger operator},
   journal={Acta Math.},
   volume={156},
   date={1986},
   number={3-4},
   pages={153--201},
   issn={0001-5962},
   review={\MR{834612 (87f:58156)}},
   doi={10.1007/BF02399203},
}

\bib{ling}{article}{
   author={Ling, Jun},
   title={Lower bounds of the eigenvalues of compact manifolds with positive
   Ricci curvature},
   journal={Ann. Global Anal. Geom.},
   volume={31},
   date={2007},
   number={4},
   pages={385--408},
   issn={0232-704X},
   review={\MR{2325223 (2009a:58035)}},
   doi={10.1007/s10455-006-9047-3},
}

\bib{mrl}{article}{
   author={Lu, Zhiqin},
   author={Rowlett, Julie},
   title={Eigenvalues of collapsing domains and drift Laplacians},
   journal={Math. Res. Lett.},
   volume={10},
   date={2012},
   number={3},
   pages={627--648},
}

\bib{MuW2}{article}{
   author={Munteanu, Ovidiu},
   author={Wang, Jiaping},
   title={Analysis of weighted Laplacian and applications to Ricci solitons},
   journal={Comm. Anal. Geom.},
   volume={20},
   date={2012},
   number={1},
   pages={55--94},
   issn={1019-8385},
   review={\MR{2903101}},
}

\bib{nel}{article}{
   author={Nelson, Edward},
   title={A quartic interaction in two dimensions},
   conference={
      title={Mathematical Theory of Elementary Particles (Proc. Conf.,
      Dedham, Mass., 1965)},
   },
   book={
      publisher={M.I.T. Press},
      place={Cambridge, Mass.},
   },
   date={1966},
   pages={69--73},
   review={\MR{0210416 (35 \#1309)}},
}

\bib{Qi}{article}{
   author={Qian, Zhong Min},
   title={Gradient estimates and heat kernel estimate},
   journal={Proc. Roy. Soc. Edinburgh Sect. A},
   volume={125},
   date={1995},
   number={5},
   pages={975--990},
   issn={0308-2105},
   review={\MR{1361628 (97c:58153)}},
   doi={10.1017/S0308210500022599},
}

\bib{SCbk}{book}{
   author={Saloff-Coste, Laurent},
   title={Aspects of Sobolev-type inequalities},
   series={London Mathematical Society Lecture Note Series},
   volume={289},
   publisher={Cambridge University Press},
   place={Cambridge},
   date={2002},
   pages={x+190},
   isbn={0-521-00607-4},
   review={\MR{1872526 (2003c:46048)}},
}

\bib{s-yau}{book}{
   author={Schoen, R.},
   author={Yau, S.-T.},
   title={Lectures on differential geometry},
   series={Conference Proceedings and Lecture Notes in Geometry and
   Topology, I},
   note={Lecture notes prepared by Wei Yue Ding, Kung Ching Chang [Gong Qing
   Zhang], Jia Qing Zhong and Yi Chao Xu;
   Translated from the Chinese by Ding and S. Y. Cheng;
   Preface translated from the Chinese by Kaising Tso},
   publisher={International Press, Cambridge, MA},
   date={1994},
   pages={v+235},
   isbn={1-57146-012-8},
   review={\MR{1333601 (97d:53001)}},
}

\bib{setti}{article}{
   author={Setti, Alberto G.},
   title={Eigenvalue estimates for the weighted Laplacian on a Riemannian
   manifold},
   journal={Rend. Sem. Mat. Univ. Padova},
   volume={100},
   date={1998},
   pages={27--55},
   issn={0041-8994},
   review={\MR{1675322 (2000a:58082)}},
}

\bib{swyy}{article}{
   author={Singer, I. M.},
   author={Wong, Bun},
   author={Yau, Shing-Tung},
   author={Yau, Stephen S.-T.},
   title={An estimate of the gap of the first two eigenvalues in the
   Schr\"odinger operator},
   journal={Ann. Scuola Norm. Sup. Pisa Cl. Sci. (4)},
   volume={12},
   date={1985},
   number={2},
   pages={319--333},
}

\bib{sturm}{article}{
   author={Sturm, Karl-Theodor},
   title={On the $L^p$-spectrum of uniformly elliptic operators on
   Riemannian manifolds},
   journal={J. Funct. Anal.},
   volume={118},
   date={1993},
   number={2},
   pages={442--453},
   issn={0022-1236},
   review={\MR{1250269 (94m:58227)}},
   doi={10.1006/jfan.1993.1150},
}

\bib{wang}{article}{
   author={Wang, Lin Feng},
   title={Rigid properties of quasi-Einstein metrics},
   journal={Proc. Amer. Math. Soc.},
   volume={139},
   date={2011},
   number={10},
   pages={3679--3689},
   issn={0002-9939},
   review={\MR{2813397 (2012e:53071)}},
   doi={10.1090/S0002-9939-2011-10758-5},
}

\bib{ww}{article}{
   author={Wei, Guofang},
   author={Wylie, Will},
   title={Comparison geometry for the Bakry-Emery Ricci tensor},
   journal={J. Differential Geom.},
   volume={83},
   date={2009},
   number={2},
   pages={377--405},
   issn={0022-040X},
   review={\MR{2577473 (2011a:53064)}},
}

\bib{yang-2}{article}{
   author={Yang, Hong Cang},
   title={Estimates of the first eigenvalue for a compact Riemann manifold},
   journal={Sci. China Ser. A},
   volume={33},
   date={1990},
   number={1},
   pages={39--51},
   issn={1001-6511},
   review={\MR{1055558 (91g:58293)}},
}

\bib{yau}{article}{author={Yau, Shing-Tung}, title={Nonlinear analysis in geometry},   journal={Enseign. Math. (2)},   volume={33}, date={1987},   number={1-2},   pages={109--158},   
}

\bib{yau-3}{article}{
   author={Yau, Shing-Tung},
   title={An estimate of the gap of the first two eigenvalues in the
   Schr\"odinger operator},
   conference={
      title={Lectures on partial differential equations},
   },
   book={
      series={New Stud. Adv. Math.},
      volume={2},
      publisher={Int. Press, Somerville, MA},
   },
   date={2003},
   pages={223--235},
}

\bib{yau-4}{article}{
   author={Yau, Shing-Tung},
   title={Gap of the first two eigenvalues of the Schr\"odinger operator
   with nonconvex potential},
   journal={Mat. Contemp.},
   volume={35},
   date={2008},
   pages={267--285},
}

\bib{zhong-yang}{article}{author={Zhong, J. Q.},   author={Yang, H. C.},   title={On the estimate of the first eigenvalue of a compact Riemannian   manifold},   journal={Sci. Sinica Ser. A},   volume={27},   date={1984}, number={12},   pages={1265--1273},   
}

\end{biblist}

\end{document}